\newtheorem{defi}{Definition}[section]
\newtheorem{prop}[defi]{Proposition}
\newtheorem{thm}[defi]{Theorem}
\newtheorem{lem}[defi]{Lemma}
\newtheorem{ex}[defi]{Example}
\numberwithin{equation}{section}
\newcommand{\N}{\mathbf{N}}
\newcommand{\Z}{\mathbf{Z}}
\newcommand{\R}{\mathbf{R}}
\newcommand{\cF}{\mathscr F}
\newcommand{\cH}{\mathscr H}		% Hausdorff measure
\newcommand{\cL}{\mathscr L}		% Lebesgue measure
\newcommand{\cR}{\mathscr R}
\newcommand{\cS}{\mathscr S}
\newcommand{\bI}{\operatorname{\mathbf I}}
\newcommand{\bM}{\operatorname{\mathbf M}}
\newcommand{\bN}{\operatorname{\mathbf N}}
\newcommand{\BV}{\operatorname{BV}}
\newcommand{\Lip}{\operatorname{Lip}}
\newcommand{\spt}{\operatorname{spt}}
\newcommand{\dist}{\operatorname{dist}}
\newcommand{\defl}{\mathrel{\mathop:}=}
\newcommand{\im}{\operatorname{im}}
\newcommand{\B}{\textbf B}		%closed ball
\newcommand{\oB}{\textbf U}		%open ball
\newcommand{\curr}[1]{[\![{#1}]\!]}
\newcommand{\id}{{\rm id}}
\newcommand{\res}{\scaleobj{1.7}{\llcorner}}
\newcommand*{\cone}{%
	{%
		\mathpalette\@coneOf{\times}%
	}%
}
\newcommand*{\@coneOf}[2]{%
	% #1: math style
	% #2: symbol, which is duplicated
	\sbox0{$\m@th#1\mathsf{#2}$}%
	\mathsf{#2}%
	\kern-\wd0 %
	\mkern2.00mu\relax
	\nonscript\mkern0.25mu\relax
	\mathsf{#2}%
}
\title[Lipschitz-volume rigidity of Lipschitz manifolds]{Lipschitz-volume rigidity of Lipschitz manifolds among integral currents}
\author{Roger Z\"{u}st}
\email{%\scriptsize roger.zuest@unibe.ch, 
	roger\_zuest@hotmail.com}
\begin{document}
		
\begin{abstract}
We give sufficient conditions such that a volume preserving 1-Lipschitz map from a metric integral current onto an infinitesimally Euclidean Lipschitz manifold is an isometry.
\end{abstract}

\maketitle

\section{Introduction}

The main question investigated in this note is the following: Suppose $(X,d_X,\mu_X)$ and $(Y,d_Y,\mu_Y)$ are metric measure spaces and  $f : X \to Y$ is onto, $1$-Lipschitz and measure preserving. Under what additional assumptions does it follow that $f$ is an isometry?

An elementary example in this direction without any measures is the following rigidity statement: If $X$ is a compact metric space and $f : X \to X$ is onto and $1$-Lipschitz, then $f$ is an isometry, see e.g.\ \cite[Theorem~1.6.15]{BBI}.

If domain and target are not assumed to be equal, then further assumptions are needed for a positive answer. In our setting, the measure preserving property is the main one. It excludes examples like $f$ being a uniform scaling of some domain in $\R^n$ with a scaling factor smaller than one. Among others, a more subtle requirement is that the target $Y$ can't have separated components (at least if metrics are not allowed to take the value $\infty$). 

An instance of Lipschitz-volume rigidity is between Riemannian manifolds and their standard volumes as stated in the following known result, see e.g.\ \cite[Lemma~9.1]{BB}:

\begin{prop}
	\label{prop:riemann}
	Suppose $X$ and $M$ are oriented, closed, connected $C^1$-manifolds equipped with continuous Riemannian metrics. If $f : X \to M$ is $1$-Lipschitz (with respect to the induced length distances), onto and $\operatorname{Vol}(X) \leq \operatorname{Vol}(M)$, then $f$ is an isometry.
\end{prop}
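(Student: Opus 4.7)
The plan is to use the area formula for Lipschitz maps to force all available inequalities to be equalities, concluding that the differential of $f$ is a linear isometry almost everywhere, and then to upgrade this infinitesimal rigidity to a global distance-preserving property.

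By Rademacher's theorem, $f$ is differentiable $\operatorname{Vol}_X$-almost everywhere, and the 1-Lipschitz assumption yields $\|Df_x\|_{\mathrm{op}} \leq 1$, hence $J_f \leq 1$ almost everywhere. Continuity and surjectivity of $f$ give $\#f^{-1}(y) \geq 1$ for every $y \in M$, so the area formula implies
\[
\operatorname{Vol}(X) \geq \int_X J_f \, d\operatorname{Vol}_X = \int_M \#f^{-1}(y) \, d\operatorname{Vol}_M(y) \geq \operatorname{Vol}(M) \geq \operatorname{Vol}(X),
\]
and all four quantities must coincide. This forces $J_f = 1$ almost everywhere, $\#f^{-1}(y) = 1$ for almost every $y \in M$, $\operatorname{Vol}(X) = \operatorname{Vol}(M)$, and, combined with $\|Df_x\|_{\mathrm{op}} \leq 1$, that $Df_x \colon T_xX \to T_{f(x)}M$ is a linear isometry at almost every $x$. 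Evaluating the degree of $f$ at a regular value with a unique preimage then gives $\deg(f) = \pm 1$.

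I would next argue that $f$ is in fact a homeomorphism: the a.e.\ isometric differential, $\#f^{-1}(y) = 1$ a.e., continuity, compactness, and $\deg(f) = \pm 1$ combine (via invariance of domain in charts and the fact that a local homeomorphism of degree $\pm 1$ between closed connected manifolds is a single-sheeted covering) to upgrade a.e.\ bijectivity to a genuine bijection.

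Finally, I would promote the a.e.\ isometric differential to length equality along every rectifiable curve. In charts, for a smooth family of curves $\{\gamma_s\}$ parameterizing a neighborhood, Fubini gives $L_M(f \circ \gamma_s) = L_X(\gamma_s)$ for a.e.\ $s$; a density argument, together with $f$ being a homeomorphism, extends this to every rectifiable curve. For any $x_1, x_2 \in X$, taking a minimizing geodesic $\eta$ from $f(x_1)$ to $f(x_2)$ in $M$ and lifting to $\tilde\eta := f^{-1} \circ \eta$ then yields
\[
d_X(x_1, x_2) \leq L_X(\tilde\eta) = L_M(\eta) = d_M(f(x_1), f(x_2)),
\]
which combined with the 1-Lipschitz upper bound shows $f$ is an isometry. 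I expect the main obstacle to be the step showing $f$ is a true homeomorphism, since this must combine the measure-theoretic rigidity ($\#f^{-1}(y) = 1$ a.e.\ and $Df_x$ isometric a.e.) with topological input (invariance of domain and the degree computation) that is not directly supplied by the area formula.
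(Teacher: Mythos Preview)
The paper does not give a direct proof of this proposition; it is stated as a known result and is recovered as the special case $T=\curr{X}$, $\partial T=0$ of Theorem~\ref{thm:rigidity} (after reorienting so that $f_\#\curr{X}=\curr{M}$, which follows from your degree computation). So the relevant comparison is between your outline and the proof of Theorem~\ref{thm:rigidity}.

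Your first block (area formula $\Rightarrow$ $J_f=1$ a.e., $\#f^{-1}(y)=1$ a.e., $Df_x$ a linear isometry a.e., $\deg f=\pm1$) is correct and is exactly how the paper begins: see the Finsler area formula step at the start of the proof of Theorem~\ref{thm:rigidity}. Your third block is also essentially the paper's argument. The Fubini-on-curves idea you describe is precisely what underlies Lemma~\ref{lem:esslength}, which shows that Riemannian manifolds are essential length spaces; one does \emph{not} get length preservation for \emph{every} rectifiable curve, only for those avoiding a fixed null set, but that is all that is needed for the final inequality $d_X(x_1,x_2)\le d_M(f(x_1),f(x_2))$, exactly as at the end of the proof of Theorem~\ref{thm:rigidity}.

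The substantive difference is your middle step. You propose to obtain the homeomorphism from ``invariance of domain in charts and the fact that a local homeomorphism of degree $\pm1$ is a single-sheeted covering''. The second fact is fine, but the first does not apply: invariance of domain needs local injectivity as \emph{input}, and you only have $\#f^{-1}(y)=1$ for almost every $y$ together with $Df_x$ isometric for almost every $x$. Neither of these is a pointwise statement, and a $1$-Lipschitz map can have $J_f=1$ a.e.\ and still fail to be locally injective at a null set of points, so there is a real gap here (which you correctly flag). The paper closes this gap by a completely different mechanism, Proposition~\ref{prop:almostisometry}: working in a chart $\pi=\varphi\circ f$, it uses the identification $\pi_\#(T\res g)=\curr{u_g}$ with $u_g\in\BV(\R^m)$, the bound $|Du_g|\le \pi_\#\|T\|$, and the pointwise Lipschitz estimate \eqref{eq:bvlipschitz} for $\BV$ functions via the Hardy--Littlewood maximal function, together with slicing $\langle T,\pi,y\rangle=\curr{x(y)}$, to produce a quantitative \emph{local bi-Lipschitz} bound for $f$ on $\spt(T)$. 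Openness and injectivity then follow from this bi-Lipschitz control plus the constancy theorem, not from degree/invariance-of-domain. In the purely Riemannian setting one could alternatively appeal to Reshetnyak-type results on mappings of bounded distortion to get openness and discreteness, but the degree argument alone, as you wrote it, does not suffice.
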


Our main result is a generalization allowing the domain to be a metric integral current in the sense of Ambrosio-Kirchheim \cite{AK}. In this setting the specific definition of volume on the domain becomes crucial and only certain choices lead to isometry. We call these volumes Euclidean rigid. The precise definition is given in \ref{def:rigid}.

\begin{thm}
	\label{thm:rigidity}
	Suppose $m\geq 1$ and $(M,d)$ is a compact, oriented, $m$-dimensional Lipschitz manifold (possibly with boundary $\partial M$) such that:
	\begin{enumerate}[$\qquad$(1)]
		\item $(M\setminus \partial M,d)$ is an essential length space and $(M,d)$ its completion,
		\item $(M,d)$ is infinitesimally Euclidean.
	\end{enumerate}
	Suppose that $T \in \bI_m(X)$ is an $m$-dimensional integral current, $\mu$ is a Finsler volume and $f : \spt(T) \to M$ is a $1$-Lipschitz map such that
	\begin{enumerate}[$\qquad$(a)]
		\item $f_\# T = \curr{M}$, % (or $f_\# \partial T= \partial \curr M$ in case this boundary is nonzero).
		\item $f(\spt(\partial T)) \subset \partial M$,
		\item $\mu$ is Euclidean rigid,
		\item $\bM^\mu(T) \leq \operatorname{Vol}(M)$.
	\end{enumerate}
	Then $f : \spt(T) \to M$ is an isometry and $T = (f^{-1})_\# \curr{M}$.
\end{thm}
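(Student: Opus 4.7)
The plan is to squeeze the mass hypothesis into a chain of equalities, extract pointwise isometric behaviour of $f$ via the Euclidean rigidity of $\mu$, and then globalise using the length-space structure on $M$.

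First I would squeeze the mass inequalities. Since $(M,d)$ is infinitesimally Euclidean, the metric differential of $\id_M$ is an inner-product norm at $\cH^m$-a.e.\ point of $M$, so on Euclidean norms $\mu$ agrees with the standard volume and $\bM^\mu(\curr{M})=\operatorname{Vol}(M)$. The Ambrosio--Kirchheim pushforward inequality for Finsler mass applied to the $1$-Lipschitz map $f$ gives $\bM^\mu(f_\# T)\leq\bM^\mu(T)$. Combined with $f_\# T=\curr{M}$ and hypothesis (c), all three quantities equal $\operatorname{Vol}(M)$.

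Next I would unpack this equality via the metric area formula, writing $\bM^\mu(T)$ and $\bM^\mu(f_\# T)$ as integrals against $\|T\|$ of Finsler area factors depending on the approximate metric differentials $\operatorname{md}_x\id$ and $\operatorname{md}_x f$ respectively. Since $f$ is $1$-Lipschitz, each $\operatorname{md}_x f$ is a weak contraction into an inner-product space (by (2)), so equality of the integrated masses forces the pointwise area inequality to be sharp $\|T\|$-a.e. Euclidean rigidity of $\mu$ then compels the tangent norm $\nu_x$ at $\|T\|$-a.e.\ $x$ to be Euclidean and $\operatorname{md}_x f$ to be a linear isometry between inner-product spaces; together with $f_\# T=\curr{M}$ and sharp mass, this also makes $f$ essentially injective on $\|T\|$ with unit multiplicity.

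To globalise, the $1$-Lipschitz property already gives $d_X(x,y)\geq d(f(x),f(y))$ for $x,y\in\spt(T)$. For the reverse inequality I would connect $f(x)$ and $f(y)$ in $M\setminus\partial M$ by a near-minimising rectifiable curve, which exists by the essential length space hypothesis, and lift it to a curve in $\spt(T)$ via a measurable section of $f^{-1}$ combined with the a.e.\ infinitesimal isometry, so that the lift has the same length; boundary points reduce to interior ones using (b) together with the completion in (1). This yields $d_X(x,y)\leq d(f(x),f(y))$, hence $f$ is an isometry, and $T=(f^{-1})_\#\curr{M}$ follows from essential injectivity. The main obstacle is the middle step: turning integrated mass equality into pointwise rigidity of the metric differential is exactly where the definition of Euclidean rigid Finsler volume must be used in full strength, to rule out non-Euclidean tangent norms that could achieve equality under a contraction with Euclidean target. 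The globalisation step is also delicate because $\spt(T)$ need not itself be a length space; the essential length structure of $M$, combined with a measurable lifting of almost-geodesics through the a.e.\ inverse of $f$, is what transports distances back to $\spt(T)$.
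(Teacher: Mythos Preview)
Your first two steps---squeezing the mass inequalities and using the area formula together with Euclidean rigidity to obtain $\theta\equiv 1$, almost-everywhere injectivity of $f$ on $S_T$, and $\operatorname{md}((\varphi_i)_x)=\operatorname{md}((f\circ\varphi_i)_x)$ a.e.---are correct and match the paper.

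The gap is in your globalisation step. You propose to lift a near-geodesic $\gamma$ in $M\setminus\partial M$ to $\spt(T)$ ``via a measurable section of $f^{-1}$ combined with the a.e.\ infinitesimal isometry''. But a measurable almost-inverse does not send curves to curves: the set on which $f$ is injective is only of full $\cH^m$-measure, and the section is merely measurable, so the composition $f^{-1}\circ\gamma$ need not be continuous, let alone rectifiable. Knowing that $\operatorname{md}_x f$ is a linear isometry for $\|T\|$-a.e.\ $x$ gives you nothing about the behaviour of $f^{-1}$ along a one-dimensional curve, which lives in a null set for $\cH^m$. Without a genuine continuous (indeed locally Lipschitz) inverse, there is no lift whose length you can compare to $L(\gamma)$, and the inequality $d_X(x,y)\leq d(f(x),f(y))$ does not follow.

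This is precisely the technical heart that the paper isolates in its Proposition~3.1: from (i) $\spt(\partial T)$ disjoint from the relevant set, (ii) $\pi_\#(T\res U)=\curr{V}$, (iii) almost-injectivity, and (iv) a uniform bound on the Hardy--Littlewood maximal function of $\pi_\#\|T\res U\|$, one deduces that $\pi$ is an honest locally bi-Lipschitz homeomorphism onto $V$. The proof uses zero-dimensional slices $\langle T,\pi,y\rangle$, the identification $\pi_\#(T\res f)=\curr{u_f}$ with $u_f\in\BV(\R^m)$, and the $\BV$ pointwise Lipschitz estimate \eqref{eq:bvlipschitz} controlled by the maximal function. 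Applied in charts of $M$, this upgrades your ``measurable section'' to a locally bi-Lipschitz inverse $f^{-1}:M\setminus\partial M\to\spt(T)\setminus\spt(\partial T)$. Only then can one lift curves, verify $L(f^{-1}\circ\gamma)=L(\gamma)$ for curves avoiding a null set (this is where the pointwise $\operatorname{md}$-equality is actually used), and invoke the essential length space property. Your outline omits this entire mechanism; without it the proof does not close.
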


This theorem generalizes \cite[Theorem~1.2]{BCS}, \cite[Corollary~1.3]{BCS} and \cite[Theorem~1.1]{DNP} and answers \cite[Question~8.1]{BCS}.

The assumptions can be further weakened. The compactness of $(M,d)$ can be replaced by complete and finite volume and finite boundary volume, so that $\curr M$ is well defined as an integral current.

Here are some clarifications of the terminology used in the statement. Essential length spaces are generalizations of classical length spaces, see Definition~\ref{def:esslength}. In particular, the induced distance of a Riemannian manifold is of this type, see Lemma~\ref{lem:esslength}. Similar to property (ET) in \cite{LW}, we call a countably $\cH^m$-rectifiable metric space $S$ infinitesimally Euclidean if whenever $\varphi : K \to S$ is a bi-Lipschitz chart defined on a compact set $K \subset \R^m$, then the metric derivative $\operatorname{md}(\varphi_x)$ in the sense of \cite{K} is induced by a inner product for almost all $x \in K$. An $m$-dimensional Finsler volume $\mu$ assigns to every norm on $\R^m$ a particular multiple of the Lebesgue measure, see Definition~\ref{def:finslervolume}. $\mu$ then also induces a volume (and a mass $\bM^\mu$) on rectifiable spaces (and rectifiable currents),see Subsection~\ref{sec:finslermass}. For example, the usual mass of rectifiable currents is induced by the Gromov-mass-star Finsler volume. On Riemannian manifolds, or more generally infinitesimally Euclidean spaces, there is only one Finsler volume. We also state an area formula for Finsler volumes between rectifiable spaces, Theorem~\ref{thm:areaformula}. This builds on the known area formula for the Hausdorff measure \cite[Theorem~7]{K}. 

%The assumptions could be weakened and closed (meaning compact without boundary) can be replaced by complete, without boundary and finite volume. The assumption on connected can be dropped if we weaken the notion of a metric and $d_i$ is allowed to take $\infty$ as a value.

We shortly explain the general strategy of the proof of Theorem~\ref{thm:rigidity}. (a) and (b) can be interpreted as saying that $f$ is a cover of $M$ with algebraic multiplicity $1$. Because $f$ is $1$-Lipschitz, (d) is equivalent to $\bM^\mu(T) = \operatorname{Vol}(M)$, which implies that $f$ is measure preserving in the sense that $\|T\|^\mu(B) = \operatorname{Vol}(f(B))$ for all Borel sets $B \subset X$. This quite readily implies that $f$ is almost injective on $T$ in a measure theoretic sense. The technical part is contained in Proposition~\ref{prop:almostisometry} which guarantees that an almost injective map to $\R^m$ is a locally bi-Lipschitz embedding in case the volume is not distorted too much. The latter is quantified by uniform bounds on Hardy-Littlewood maximal functions of the push-forward measure. The tools used in the proof of Proposition~\ref{prop:almostisometry} are zero-dimensional slices and the connection between normal currents in $\R^m$ and $\BV$-functions. It is a rather direct extraction of the partial rectifiability theorem in the theory of metric currents as stated in \cite[Theorem~7.4]{AK} and \cite[Theorem~7.6]{L}. Working in charts of $M$, Proposition~\ref{prop:almostisometry} implies that $f : \spt(T)\setminus\spt(\partial T) \to M \setminus \partial M$ is a homeomorphism which is locally bi-Lipschitz. Assumption (2) and (c) then further imply that $f$ is an infinitesimal isometry and as a consequence it preserves the length of almost every curve. (1) then allows for a local to global argument to conclude that $f$ is an isometry.

Assumption (1) can't be replaced by the weaker assumption that $(M,d)$ is a length space as shown in Example~\ref{ex:esslength}. Assumption (c) is necessary as seen by linear maps between domains of normed spaces. We show that in particular the Busemann-Hausdorff and the Gromov-mass-star volume are Euclidean rigid, see Lemma~\ref{lem:infirigid} and Lemma~\ref{lem:infirigid2}.

\section{Preliminaries}

\subsection{Metric currents}
Let $X$ be a complete metric space. $\B(x,r)$ denotes the closed ball and $\oB(x,r)$ the open ball around a point $x \in X$ with radius $r > 0$. 

Following the theory of Ambrosio and Kirchheim \cite{AK}, for an integer $m \geq 0$, an $m$-dimensional metric current $T \in \bM_m(X)$ of finite mass in $X$ is a multilinear function $T : \Lip_{\rm b}(X) \times \Lip(X)^m \to \R$ with an associated finite Borel measure $\|T\|$ on $X$. Currents are best understood as a generalization of oriented, compact Riemannian manifolds. For more details and the terminology we refer to \cite{AK}.

According to \cite[Theorem~3.4]{AK}, normal currents $\bN_{m}(\R^m)$ can be identified with the space $\BV(\R^m)$ of functions with bounded variation, i.e.\ those $u \in \text{L}^1(\R^m)$ with
\[
|Du|(U) \defl \sup \left\{\int_U u \operatorname{div}(\varphi) \, d \cL^m \, : \, \varphi \in C^1_c(U,\R^m), \, \|\varphi\|_\infty \leq 1 \right\} < \infty
\]
for all open sets $U \subset \R^m$. Moreover, $\|\curr u\| = \cL^m\res u$ and $\|\partial \curr u\| = |Du|$.

If $\mu$ is a finite Borel measure on $\R^m$, then $M_{\mu} : \R^m \rightarrow [0,\infty]$ denotes the Hardy-Littlewood maximal function defined by
\[
M_\mu(x) \defl \sup_{r > 0} \frac{\mu(\B(x,r))}{\alpha_m r^m} \ ,
\]
where $\alpha_m$ is the (Lebesgue) volume of the Euclidean unit ball in $\R^m$. A covering argument shows that $M_\mu(x) < \infty$ for $\cL^m$-almost every $x \in \R^m$. If $u \in \BV(\R^m)$ and $x,x'\in \R^m$ are Lebesgue points of $u$, then
\begin{equation}
	\label{eq:bvlipschitz}
	|u(x) - u(x')| \leq  c_m \left(M_{|Du|}(x) + M_{|Du|}(x')\right) |x - x'|
\end{equation}
for some constant $c_m > 0$ depending only on $m$. This is a classical result. For proofs, see for example \cite[Lemma~7.1]{L} or \cite[Lemma~7.3]{AK}.

The next result is contained in the statement of \cite[Theorem~7.5]{L} within the theory of local metric currents and follows directly from \cite[Equation~(5.7)]{AK}.

\begin{lem}
	\label{lem:formula}
Suppose $m \geq 1$, $T \in \bN_{m}(X)$, $\pi \in \Lip(X,\R^m)$ and $f \in \Lip(X)$. Then $\pi_\#(T \res f) = \curr{u_f}$ for some $u_f \in \BV(\R^m)$ and
\[
	\langle T, \pi, y \rangle (f) = u_f(y)
\]
for almost every $y \in \R^m$.
\end{lem}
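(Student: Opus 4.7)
The plan is to identify $u_f$ by expanding $\pi_\#(T\res f)$ in two ways: once through the defining property of pushforward and restriction, and once through the $\BV$ parametrization of \cite[Theorem~3.4]{AK}, then matching the two via the slicing formula \cite[Equation~(5.7)]{AK}.

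First I would observe that $T\res f$ is a normal $m$-current: since $T\in\bN_m(X)$ and $f$ is bounded on $\spt\|T\|$ after truncation if needed, one has $\bM(T\res f)\leq\|f\|_\infty\bM(T)$ and $\bM(\partial(T\res f))\leq\Lip(f)\bM(T)+\|f\|_\infty\bM(\partial T)$. Pushforward by a Lipschitz map preserves normality, so $\pi_\#(T\res f)\in\bN_m(\R^m)$, and \cite[Theorem~3.4]{AK} provides $u_f\in\BV(\R^m)$ with $\pi_\#(T\res f)=\curr{u_f}$. This disposes of the first assertion of the lemma and defines the function to be identified.

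To pin down $u_f$, fix $\varphi\in C_c^\infty(\R^m)$ and evaluate $\pi_\#(T\res f)$ on $(\varphi,y_1,\ldots,y_m)$, where $y_1,\ldots,y_m$ are the coordinate functions on $\R^m$. The $\BV$ identification gives $\int_{\R^m}\varphi\, u_f\,d\cL^m$, while unwinding the definitions of pushforward and restriction produces $T(f\cdot(\varphi\circ\pi),\pi_1,\ldots,\pi_m)$. The slicing formula \cite[Equation~(5.7)]{AK}, applied to the top form $\varphi\circ\pi\,d\pi_1\wedge\cdots\wedge d\pi_m$ and then evaluated on the $0$-form $f$, rewrites this last quantity as $\int_{\R^m}\varphi(y)\,\langle T,\pi,y\rangle(f)\,d\cL^m(y)$. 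Equating the two expressions and letting $\varphi$ range over a countable dense subfamily of $C_c^\infty(\R^m)$ yields the pointwise identity $u_f(y)=\langle T,\pi,y\rangle(f)$ for $\cL^m$-almost every $y$ (the $\cL^m$-measurability of $y\mapsto\langle T,\pi,y\rangle(f)$ being part of the slicing theorem).

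The main obstacle is really bookkeeping rather than a genuine difficulty: one must handle possible unboundedness of $f$ by restricting to $\spt\|T\|$, keep AK's sign and normalization conventions consistent, and invoke (5.7) in precisely the form that equates the integrated $0$-current slices with the restriction of $T$ by the pulled-back top form. Once these routine checks are in place, the lemma reduces to the short computation described above.
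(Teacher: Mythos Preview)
Your proposal is correct and follows essentially the same route as the paper: both identify $u_f$ via \cite[Theorem~3.4]{AK} and then test against compactly supported functions, using \cite[Equation~(5.7)]{AK} to rewrite $T((\varphi\circ\pi)\cdot f,\pi)$ as $\int_{\R^m}\varphi(y)\,\langle T,\pi,y\rangle(f)\,d\cL^m(y)$ and matching it with $\int_{\R^m}\varphi\,u_f\,d\cL^m$. The paper's version is terser and omits your preliminary justification that $T\res f\in\bN_m(X)$ and the remark on truncating~$f$, but the argument is the same.
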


\begin{proof}
Let $\psi \in C_c(\R^m)$ be arbitrary. By \cite[Equation~(5.7)]{AK},
\begin{align*}
\int_{\R^m}\psi(y)\langle T, \pi, y \rangle (f)\,d\cL^m(y) & = T((\psi\circ \pi)\cdot f, \pi) \\
 & = (\pi_\# (T \res f))(\psi, \id_{\R^m}) \\
 & = \int_{\R^m}\psi(y)u_f(y)\,d\cL^m(y) \ .
\end{align*}
Thus $\langle T, \pi, y \rangle (f) = u_f(y)$ for almost every $y \in \R^m$.
\end{proof}

According to \cite[Lemma~4.1]{AK} a subset $S$ of $X$ is countably $\cH^m$-rectifiable if there exist countably many bi-Lipschitz maps $\varphi_i : K_i \to S$ defined on compact subsets $K_i \subset \R^m$ such that the images $\varphi_i(K_i)$ are pairwise disjoint and
\[
\cH^m\biggl(S \setminus \bigcup_i \varphi_i(K_i)\biggr) = 0 \ .
\]
We call such a collection of charts $(\varphi_i,K_i)$ an atlas for $S$.

By \cite[Theorem~4.5]{AK} a current $T \in \bM_m(X)$ is rectifiable if there exists a countably $\cH^m$-rectifiable $S \subset X$ with an atlas $(\varphi_i,K_i)$ and $\theta_i \in \text L^1(K_i)$ for each $i$ such that
\[
\bM(T) = \sum_i \bM(\varphi_{i\#}\curr{\theta_i}) \quad \mbox{and} \quad T = \sum_i \varphi_{i\#}\curr{\theta_i} \ .
\]
The collection of such rectifiable currents is denoted by $\cR_m(X)$. If the densities $\theta_i$ above are in $L^1(K_i,\Z)$, then $T$ is integer rectifiable and their collection is denoted by $I_m(X)$. Moreover, $\bI_m(X) \defl \mathcal I_m(X) \cap \bN_m(X)$ is the collection of integral currents.

There is a canonical choice for the set $S$, namely
\begin{equation}
	\label{def:st}
S_T \defl \{x \in X : \Theta_{*m}(\|T\|,x) > 0\} \ .
\end{equation}
See \cite[Theorem~4.6]{AK}.

\subsection{Finsler Volumes}

Finsler volumes are consistent choices of Haar-measures in normed spaces.

\begin{defi}
	\label{def:finslervolume}
Given an integer $m \geq 1$, an \textbf{$\mathbf m$-dimensional Finsler volume} $\mu$ assigns to every $m$-dimensional normed space $V$ a Haar measure $\mu_V$ with the properties:
\begin{enumerate}
	\item If $A : V\to W$ is linear and short (i.e.\ $\|A\|\leq 1$), then $A$ is volume decreasing, i.e.\ $\mu_W(A(B)) \leq \mu_V(B)$ for all Borel sets $B \subset V$.
	\item If $V$ is Euclidean (i.e.\ the norm is induced by an inner product), then $\mu_V$ is the standard Euclidean volume (the Lebesgue measure with respect to some orthonormal coordinate system).
\end{enumerate}
\end{defi}

Our two primary examples are the Busemann-Hausdorff volume $\mu^{\rm{bh}}$ and the Gromov-mass-star $\mu^{\rm m\ast}$. $\mu^{\rm{bh}}$ agrees with the $m$-dimensional Hausdorff measure and has the defining property that $\mu^{\rm{bh}}_V(\B_V(0,1)) = \alpha_m$, see e.g.\ \cite[Lemma~6]{K}. 
%It will follow that $\mu^{\rm{bh}}$ agrees with $\cH^m$ also on rectifiable sets.
$\mu^{\rm m\ast}$ is defined by $\mu_V^{\rm m \ast}(P) = 2^m$ if $P$ is a parallelepiped of minimal volume that contains $\B_V(0,1)$. 
%The mass for metric currents in \cite{AK} and \cite{L} are induced by the Gromov-mass-star.

A special subclass of Finsler volumes is extracted in the following definition. We denote by $|\cdot|$ the standard Euclidean norm on $\R^m$.

\begin{defi}
	\label{def:rigid}
An $m$-dimensional Finsler volume $\mu$ is \textbf{Euclidean rigid} if the following holds: If $\|\cdot\|$ is a norm on $\R^m$ such that 
\begin{enumerate}
	\item $\|\cdot\| \geq |\cdot|$ and
	\item $\mu_{\|\cdot\|} \leq \mu_{|\cdot|}$,
\end{enumerate}
then $\|\cdot\| = |\cdot|$.
\end{defi}

Note that (1) is equivalent to $\id : (\R^m,\|\cdot\|) \to (\R^m,|\cdot|)$ being $1$-Lipschitz. This implies $\mu_{|\cdot|} \leq \mu_{\|\cdot\|}$ by the definition of Finsler volumes. Thus (2) is equivalent to $\mu_{\|\cdot\|} = \mu_{|\cdot|}$.

Due to the properties of Finsler volumes, this definition has the following seemingly more general but equivalent formulation: An $m$-dimensional Finsler volume $\mu$ is Euclidean rigid if and only if the following property holds: If $A : V \to H$ is a linear map from a normed space $V$ into an Euclidean space $H$ of the same dimension $m$ such that $f$ is $1$-Lipschitz and volume preserving (i.e.\ $\mu_V(B) = \mu_H(A(B))$ for all Borel sets $B$), then $A$ is an isometry (i.e.\ $\|v\|_V = \|A(v)\|_H$ for all $v$).

Many definitions of volume have this property. For example the Busemann-Hausdorff and the Gromov-mass-star volume as shown in Lemma~\ref{lem:infirigid} and Lemma~\ref{lem:infirigid2} below. As a consequence also the largest Finsler volume, namely the inscribed Riemannian volume $\mu^{\rm{ir}}$, is Euclidean rigid. See e.g.\ \cite{I} for the precise definition and properties of this volume. $\mu^{\rm{ir}}$ is complemented by the smallest Finsler volume, which we call the circumscribed Riemannian volume $\mu^{\rm{cr}}$. By definition, $\mu^{\rm{cr}}(E) = \alpha_m$ for the (unique) minimal volume ellipsoid $E$ that contains the unit ball of the given normed space. In contrast to the volumes mentioned above, $\mu^{\rm{cr}}$ is not Euclidean rigid as shown in the following example.

\begin{ex}
	\label{ex:outerriem}
Let $|\cdot|$ be the standard Euclidean norm on $\R^2$ with unit disk $B$ and let $C \subset B$ be a regular $2n$-con ($n \geq 2$) with vertices on the unit circle $\partial B$. By symmetry, $B$ is the ellipse of minimal area that contains $C$. Let $\|\cdot\|$ be the norm of $\R^2$ for which $C$ is the unit disk. Then $\id : (\R^2,\|\cdot\|) \to (\R^2,|\cdot|)$ is $1$-Lipschitz and volume preserving $\mu^{\rm{cr}}_{\|\cdot\|} = \mu^{\rm{cr}}_{|\cdot|}$, but the two norms are obviously not equal.
\end{ex}

\begin{lem}
	\label{lem:infirigid}
The Busemann-Hausdorff volume $\mu^{\rm{bh}}$ is Euclidean rigid.
\end{lem}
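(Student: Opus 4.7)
The plan is to reduce everything to a comparison of Lebesgue measures of unit balls, which is the natural move since a Haar measure on $\R^m$ is a constant multiple of $\cL^m$ and the Busemann--Hausdorff normalization pins that constant down in a transparent way.

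First, I would unpack the defining property $\mu^{\rm bh}_V(\B_V(0,1)) = \alpha_m$ in coordinates. Writing $B = \B_{\|\cdot\|}(0,1)$ and $B_E = \B_{|\cdot|}(0,1)$, both $\mu^{\rm bh}_{\|\cdot\|}$ and $\mu^{\rm bh}_{|\cdot|}$ are constant multiples of Lebesgue measure on $\R^m$, and the normalizations give
\[
\mu^{\rm bh}_{\|\cdot\|} = \frac{\alpha_m}{\cL^m(B)}\cL^m, \qquad \mu^{\rm bh}_{|\cdot|} = \cL^m,
\]
the latter because the defining condition coincides with the Euclidean Lebesgue normalization when $V$ is Euclidean (so (2) of Definition~\ref{def:finslervolume} is consistent).

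Next I would translate the two hypotheses of Definition~\ref{def:rigid} into two opposing inequalities on $\cL^m(B)$. Hypothesis (1), $\|\cdot\| \geq |\cdot|$, says that the identity is $1$-Lipschitz from $(\R^m,\|\cdot\|)$ to $(\R^m,|\cdot|)$, which is equivalent to the inclusion $B \subset B_E$; monotonicity of $\cL^m$ then yields $\cL^m(B) \leq \alpha_m$. Hypothesis (2), $\mu^{\rm bh}_{\|\cdot\|} \leq \mu^{\rm bh}_{|\cdot|}$, reads as $\alpha_m/\cL^m(B) \leq 1$, i.e.\ $\cL^m(B) \geq \alpha_m$. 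Combining, $\cL^m(B) = \alpha_m = \cL^m(B_E)$.

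The only real step that needs a short argument is concluding $B = B_E$ from equality of Lebesgue measures together with $B \subset B_E$, since both are compact convex bodies with $0$ in their interior. If $B \neq B_E$, pick $y \in B_E \setminus B$; since $\|y\| > 1$, the segment $(1-\varepsilon)y$ for small $\varepsilon > 0$ stays outside $B$ while lying in the interior of $B_E$, and closedness of $B$ gives a small Euclidean ball around such a point contained in $B_E \setminus B$, contradicting $\cL^m(B) = \cL^m(B_E)$. Hence $B = B_E$, so the two norms have the same unit ball and therefore coincide. I do not expect any serious obstacle; the entire argument is driven by the rigid normalization of $\mu^{\rm bh}$, which precisely converts the volume inequality into a Lebesgue-measure inequality on unit balls.
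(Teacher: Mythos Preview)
Your proof is correct and follows essentially the same route as the paper: both arguments use the Busemann--Hausdorff normalization $\mu^{\rm bh}_V(\B_V(0,1))=\alpha_m$ together with the inclusion $\B_{\|\cdot\|}(0,1)\subset \B_{|\cdot|}(0,1)$ to force equality of the unit balls. The only cosmetic difference is that you first convert $\mu^{\rm bh}_{\|\cdot\|}$ and $\mu^{\rm bh}_{|\cdot|}$ into explicit multiples of $\cL^m$ and compare Lebesgue measures, whereas the paper writes the chain $\mu^{\rm bh}_{\|\cdot\|}(\B_{\|\cdot\|})\leq \mu^{\rm bh}_{|\cdot|}(\B_{\|\cdot\|})\leq \mu^{\rm bh}_{|\cdot|}(\B_{|\cdot|})$ directly; your final step deducing $B=B_E$ from $B\subset B_E$ with $\cL^m(B)=\cL^m(B_E)$ is exactly what the paper leaves implicit.
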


\begin{proof}
Let $\|\cdot\|$ be a norm on $\R^m$ with properties (1) and (2) of Definition~\ref{def:rigid}. The Busemann-Hausdorff volume has the defining property
\[
	\mu^{\rm{bh}}_{\|\cdot\|}(\B_{\|\cdot\|}(0,1)) = \alpha_m =  \mu^{\rm{bh}}_{|\cdot|}(\B_{|\cdot|}(0,1)) \ .
\]
Since $\B_{\|\cdot\|}(0,1) \subset \B_{|\cdot|}(0,1)$ by (1) and $\mu^{\rm{bh}}_{\|\cdot\|} \leq \mu^{\rm{bh}}_{|\cdot|}$ by (2), it holds
\begin{align*}
	\mu^{\rm{bh}}_{\|\cdot\|}(\B_{\|\cdot\|}(0,1)) \leq \mu^{\rm{bh}}_{|\cdot|}(\B_{\|\cdot\|}(0,1)) \leq \mu^{\rm{bh}}_{|\cdot|}(\B_{|\cdot|}(0,1)) = \mu^{\rm{bh}}_{\|\cdot\|}(\B_{\|\cdot\|}(0,1)) \ .
\end{align*}
Thus equality holds and hence $\B_{\|\cdot\|}(0,1) = \B_{|\cdot|}(0,1)$, or equivalently, $\|\cdot\| = |\cdot|$.
\end{proof}

\begin{lem}
	\label{lem:infirigid2}
The Gromov-mass-star volume $\mu^{\rm{m}\ast}$ is Euclidean rigid.
\end{lem}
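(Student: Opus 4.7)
The plan is to translate both hypotheses into concrete statements about minimum-volume parallelepipeds enclosing the unit balls, and then to contradict strict inclusion $B_{\|\cdot\|}\subsetneq B_{|\cdot|}$ by exhibiting an explicit parallelepiped around $B_{\|\cdot\|}$ of Lebesgue volume strictly less than $2^m$.

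For any norm on $\R^m$, the definition of $\mu^{\mathrm{m}\ast}$ gives
\[
	\mu^{\mathrm{m}\ast}_{\|\cdot\|}\;=\;\tfrac{2^m}{\nu(\|\cdot\|)}\,\cL^m,
	\qquad
	\nu(\|\cdot\|)\;:=\;\inf\bigl\{\cL^m(P) : P\supset B_{\|\cdot\|},\ P\ \text{a parallelepiped}\bigr\}.
\]
Averaging $P$ with $-P$ shows the infimum is realized on centrally symmetric parallelepipeds $P=\{x:|\langle x,w_i\rangle|\leq 1\}$, for which containment of $B_{\|\cdot\|}$ is equivalent to $\sup_{x\in B_{\|\cdot\|}}\langle x,w_i\rangle\leq 1$, and $\cL^m(P)=2^m/|\det(w_1,\ldots,w_m)|$. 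In the Euclidean case Hadamard's inequality gives $\nu(|\cdot|)=2^m$, so $\mu^{\mathrm{m}\ast}_{|\cdot|}=\cL^m$, and hypothesis (2) reads $\nu(\|\cdot\|)\geq 2^m$.

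Now I would argue by contradiction, assuming $B_{\|\cdot\|}\subsetneq B_{|\cdot|}$. Pick $p\in B_{|\cdot|}\setminus B_{\|\cdot\|}$ and use Hahn--Banach to separate $p$ from $B_{\|\cdot\|}$ by a hyperplane whose normal $e$ is chosen as a \emph{Euclidean} unit vector. Writing $h(v):=\sup_{x\in B_{\|\cdot\|}}\langle x,v\rangle$, this gives
\[
	h(e)\;<\;\langle p,e\rangle\;\leq\;|p|\cdot|e|\;\leq\;1.
\]
Extend $e$ to a Euclidean orthonormal basis $e_1=e,e_2,\ldots,e_m$; each $h(e_i)$ is positive since $B_{\|\cdot\|}$ has nonempty interior, and $h(e_i)\leq h_{|\cdot|}(e_i)=1$ since $B_{\|\cdot\|}\subset B_{|\cdot|}$. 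Setting $v_i:=e_i/h(e_i)$, centrally symmetric containment $|\langle x,v_i\rangle|\leq 1$ on $B_{\|\cdot\|}$ holds by construction, so $P:=\{x:|\langle x,v_i\rangle|\leq 1,\ i=1,\ldots,m\}\supset B_{\|\cdot\|}$ and
\[
	\cL^m(P)\;=\;\tfrac{2^m}{|\det(v_1,\ldots,v_m)|}\;=\;2^m\prod_{i=1}^m h(e_i)\;<\;2^m,
\]
the strict inequality coming from $h(e_1)<1$. This contradicts $\nu(\|\cdot\|)\geq 2^m$, forcing $B_{\|\cdot\|}=B_{|\cdot|}$, i.e., $\|\cdot\|=|\cdot|$.

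The delicate point is the separation step: the separating functional must be normalized as a \emph{Euclidean} unit vector, for only then can every entry of the product $\prod h(e_i)$ be dominated by $h_{|\cdot|}(e_i)=1$, and only the Euclidean orthonormality makes $|\det(e_i)|=1$ so that the Jacobian correction reduces cleanly to $\prod h(e_i)$. Everything else is a direct Hadamard/volume computation mirroring the Busemann--Hausdorff argument in Lemma~\ref{lem:infirigid}.
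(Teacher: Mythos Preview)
Your proof is correct and follows essentially the same route as the paper: both arguments reduce the hypothesis $\mu^{\mathrm{m}\ast}_{\|\cdot\|}\le\mu^{\mathrm{m}\ast}_{|\cdot|}$ to the statement that no parallelepiped of Lebesgue volume $<2^m$ can enclose $B_{\|\cdot\|}$, and both contradict a strict inclusion $B_{\|\cdot\|}\subsetneq B_{|\cdot|}$ by shrinking one face of a unit cube aligned with a Euclidean orthonormal basis. The only cosmetic difference is that the paper fixes an arbitrary orthonormal basis and shows each $e_1$ must lie in $B_{\|\cdot\|}$ (else shrink the cube in direction $e_1$), whereas you run the contrapositive, starting from a point $p\notin B_{\|\cdot\|}$ and using Hahn--Banach to locate the shrinkable direction; the geometric content is identical.
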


\begin{proof}
Let $\|\cdot\|$ be a norm on $\R^m$ with properties (1) and (2) of Definition~\ref{def:rigid}. Let $e_1,\dots,e_m$ be any orthonormal basis with respect to the standard Euclidean norm $|\cdot|$ and denote by $f_1,\dots,f_m$ the dual basis. The unit ball $\B_{|\cdot|}(0,1)$ is contained in the parallelepiped
\[
	P \defl \{x \in \R^m : |f_i(x)| \leq 1, \mbox{ for all } i\}
\]
of (Lebesgue) volume $2^m$. It holds $\B_{\|\cdot\|}(0,1) \subset P$ because $\|\cdot\| \geq |\cdot|$ and $\mu^{\rm{m}\ast}_{\|\cdot\|}(P) = 2^m$ because $\mu_{|\cdot|}^{\rm{m}\ast} = \mu_{\|\cdot\|}^{\rm{m}\ast}$ by assumption. By the definition of the Gromov-mass-star volume, $P$ must be a parallelepiped of minimal volume that contains $\B_{\|\cdot\|}(0,1)$. The only points $x \in \B_{|\cdot|}(0,1)$ with $|f_1(x)| = 1$ are $e_1$ and $-e_1$. Because $\B_{\|\cdot\|}(0,1) \subset \B_{|\cdot|}(0,1)$, the points $\pm e_1$ have to be contained in $\B_{\|\cdot\|}(0,1)$ too. Otherwise, $P$ can be scaled in direction $e_1$ to a parallelepiped of smaller volume that also contains $\B_{\|\cdot\|}(0,1)$. This is a contradiction. Since the orthonormal basis $e_1,\dots,e_m$ is arbitrary, we conclude that every point in the sphere $\partial \B_{|\cdot|}(0,1)$ is contained in $\B_{\|\cdot\|}(0,1)$. Hence $\B_{\|\cdot\|}(0,1) = \B_{|\cdot|}(0,1)$, respectively, $\|\cdot\| = |\cdot|$.
\end{proof}

\subsection{Finsler Mass}
\label{sec:finslermass}

As in \cite[Definition~2.4]{Z} any Finsler volume induces a notion of volume on rectifiable spaces and mass on rectifiable metric currents. To recall this definition, we first need the Jacobian of seminorms.

If $s$ is a seminorm on $\R^m$, the Jacobian $\mathbf{J}^{\mu}(s)$ of $s$ with respect to $\mu$ is $\mu_s([0,1]^m)$ if $s$ is a norm, and $0$ otherwise. Equivalently, in case $s$ is a norm,
\begin{equation}
	\label{eq:defjacobian}
	\mathbf{J}^{\mu}(s) = \frac{\mu_s(B)}{\cL^m(B)}
\end{equation}
for a (any) Borel set $B \subset \R^m$ of positive and finite measure.

Suppose $S \subset X$ is a countably $\cH^m$-rectifiable with atlas $(\varphi_i,K_i)$. We may isometrically embed $X$ into $\ell_\infty(X)$ and extend each $\varphi_i$ to a Lipschitz map defined on all of $\R^m$. Due to \cite{K}, the metric derivative of $\varphi_i$ is defined at almost every point $x \in K_i$ and denoted by $\operatorname{md}(\varphi_i)_x$. A metric derivative is a seminorm on $\R^m$. We note that a different isometric embedding into $\ell_\infty$ and a different Lipschitz extension of $\varphi_i$ changes $\operatorname{md}(\varphi_i)$ in at most a set of measure zero. These choices could also be bypassed by switching to approximate limits in the definition of metric derivatives similar to \cite[\S3.1.2]{F} in the classical case.

Suppose $\mu$ is an $m$-dimensional Finsler volume, then $\mu_S$ is the Borel measure on $S$ defined by
\begin{equation}
	\label{eq:musdef}
\mu_S(B) \defl \sum_i \int_{K_i \cap \varphi_i^{-1}(B)} \mathbf J^\mu(\operatorname{md}((\varphi_i)_x))\,d\cL^m(x) \ .
\end{equation}

Suppose $T \in \cR_m(X)$ is induced by an atlas with densities $(\varphi_i,K_i,\theta_i)$, then the Borel measure and mass of $T$ with respect to $\mu$ is defined by
\[
\|T\|^\mu(B) \defl \sum_i \int_{K_i \cap \varphi_i^{-1}(B)} |\theta_i(x)| \mathbf J^\mu(\operatorname{md}(\varphi_i)_x)\,d\cL^m(x) \ ,
\]
and $\bM^\mu(T) \defl \|T\|^\mu(X)$. Similarly we could define a mass on rectifiable $G$-chains as introduced in \cite{DPH}.

Essentially by \cite[Lemma~2.5]{Z} it holds:
\begin{itemize}
	\item This extended notion of volume and mass on rectifiable spaces is compatible with the underlying Finsler volume on normed spaces.
	\item The Gromov-mass-star measure $\|T\|^{\rm m\ast}$ is the usual measure $\|T\|$ for metric currents, whereas the mass on rectifiable $G$-chain is induced by the Busemann-Hausdorff volume.
	\item Finsler volumes are comparable in the sense that
	\[
	C_m^{-1}\|T\|^{\mu_2} \leq \|T\|^{\mu_1} \leq C_m\|T\|^{\mu_2}
	\]
	for some universal $C_m \geq 1$. In particular, any such measure is comparable to the Ambrosio-Kirchheim or the Hausdorff mass.
\end{itemize}
The last point indicates in particular that questions concerning measurability, integrability and null-sets are independent of the choice of Finsler volume.

Suppose $f : X \to Y$ is Lipschitz and $S \subset X$ is countably $\cH^m$-rectifiable with atlas $(\varphi_i,K_i)$ as above. For $\cH^m$-almost every $x \in S$ there exists $i$ and $y \in K_i$ with $\varphi_i(y) = x$ such that the $\mu$-Jacobian of $f$ at $x$ is well-defined by
\[
\mathbf J^\mu(\operatorname{md}(f_{x})) \defl \frac{\mathbf J^\mu(\operatorname{md}((f \circ \varphi_i)_y))}{\mathbf J^\mu(\operatorname{md}((\varphi_i)_y))} \ .
\]
Note that $\operatorname{md}((\varphi_i)_y))$ is indeed a norm (not a degenerated seminorm) for almost every $x \in K_i$ by the area formula with respect to the Hausdorff measure $\cH^m$, \cite[Theorem~7]{K}, and the fact that bi-Lipschitz maps preserve $\cH^m$-null sets. This definition is independent of the underlying atlas in the sense that for another atlas the two definitions agree in the complement of an $\cH^m$-null subset of $S$. This boils down to 
%the Euclidean area formula for Lipschitz maps \cite[Theorem 3.2.3]{F} and 
the following chain rule for Jacobians. Assume $(\varphi,K_1)$ and $(\psi,K_2)$ are two charts of $S$ with the same image. Then for almost every $x \in K_1$,
\begin{align*}
\mathbf J^\mu(\operatorname{md}((f \circ \varphi)_x)) & = \mathbf J^\mu\bigl(\operatorname{md}((f \circ \psi \circ \psi^{-1} \circ\varphi)_x)\bigr) \\
 & = \mathbf J^\mu\bigl(\operatorname{md}((f \circ \psi)_{(\psi^{-1} \circ\varphi)(x)}) \circ D(\psi^{-1} \circ\varphi)_x \bigr) \\
 & = \mathbf J^\mu\bigl(\operatorname{md}((f \circ \psi)_{(\psi^{-1} \circ\varphi)(x)})\bigr) \, \bigl|\det(D(\psi^{-1} \circ\varphi)_x)\bigr| \ .
\end{align*}
If we plug in $f = \id$ we also obtain
\begin{align*}
\frac{\mathbf J^\mu(\operatorname{md}(\varphi_x))} {\mathbf J^\mu\bigl(\operatorname{md}(\psi_{(\psi^{-1} \circ\varphi)(x)})\bigr)} & = \bigl|\det(D(\psi^{-1} \circ\varphi)_x)\bigr| \ ,
\end{align*}
and hence
\[
\frac{\mathbf J^\mu(\operatorname{md}((f \circ \varphi)_x))}{\mathbf J^\mu(\operatorname{md}(\varphi_x))} = \frac{\mathbf J^\mu\bigl(\operatorname{md}((f \circ \psi)_{(\psi^{-1} \circ\varphi)(x)})\bigr)} {\mathbf J^\mu\bigl(\operatorname{md}(\psi_{(\psi^{-1} \circ\varphi)(x)})\bigr)}
\]
for almost all $x \in K_1$.

\begin{thm}[Finsler area formula]
	\label{thm:areaformula}
Suppose that $X$ and $Y$ are complete metric spaces, $S \subset X$ is countably $\cH^m$-rectifiable, $f : X \to Y$ is Lipschitz and $g : S \to [0,\infty]$ is measurable. Then
\[
\int_S g(x) \mathbf J^\mu(\operatorname{md}(f_x)) \,d\mu_S(x) = \int_{f(S)} \Biggl(\sum_{x \in f^{-1}(y)} g(x) \Biggr) \,d\mu_{f(S)}(y) \ .
\]
\end{thm}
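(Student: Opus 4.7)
The plan is to reduce the identity to the classical metric area formula of Kirchheim \cite[Theorem~7]{K} by working chart by chart. By monotone convergence and approximation of nonnegative measurable functions by simple functions, it suffices to verify the identity for $g = \chi_B$ with $B \subset S$ Borel. Since $\cH^m$-null sets contribute nothing to either side, by the comparability of Finsler masses with the Hausdorff measure, and since the atlas images $\varphi_i(K_i)$ are pairwise disjoint and cover $S$ up to such a null set, I further reduce to $B \subset \varphi(K)$ for a single chart $\varphi = \varphi_i$, $K = K_i$. Setting $A = \varphi^{-1}(B)$ and $h = f \circ \varphi : K \to Y$, unpacking (\ref{eq:musdef}) and the definition of $\mathbf{J}^\mu(\operatorname{md}(f_x))$ rewrites the left-hand side as
\[
\int_A \mathbf{J}^\mu(\operatorname{md}(h_y)) \, d\cL^m(y) \ .
\]

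The key step is then a Lusin-type decomposition for the Lipschitz map $h : K \to Y$, as used in Kirchheim's proof of the metric area formula: outside a set $Z \subset A$ on which $\mathbf{J}^\mu(\operatorname{md}(h_y)) = 0$ (i.e., where the metric derivative $\operatorname{md}(h_y)$ fails to be a norm, together with the $\cL^m$-null set where it is undefined; moreover $h(Z)$ is $\cH^m$-null), the complement $A \setminus Z$ decomposes as a countable disjoint union $\bigsqcup_k E_k$ with each $h|_{E_k}$ bi-Lipschitz onto its image. For each $k$, the pair $(h|_{E_k}, E_k)$ is then a chart for the rectifiable subset $h(E_k) \subset f(S)$, and the atlas-independence of $\mu_{f(S)}$ (verified just before the theorem statement via the Jacobian transformation formula with $f = \id$) gives
\[
\int_{E_k} \mathbf{J}^\mu(\operatorname{md}(h_y)) \, d\cL^m(y) = \mu_{f(S)}(h(E_k)) \ .
\]
Summing over $k$, the left-hand side equals $\sum_k \mu_{f(S)}(h(E_k))$.

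For the right-hand side, the injectivity of each $h|_{E_k}$ together with the disjointness of the $E_k$ yields, for $\mu_{f(S)}$-almost every $z \in f(S)$,
\[
\sum_{x \in f^{-1}(z)} \chi_B(x) = \#\{y \in A \setminus Z : h(y) = z\} = \sum_k \chi_{h(E_k)}(z) \ ,
\]
and integrating against $\mu_{f(S)}$ produces again $\sum_k \mu_{f(S)}(h(E_k))$, matching the left-hand side. The main obstacle is really the Lusin-type decomposition of $h$ into bi-Lipschitz pieces plus a set on which the metric Jacobian vanishes; once this is accepted from the Kirchheim framework, the remainder is bookkeeping combined with the definitions. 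A minor subtlety to verify is that one single $\cL^m$-null exceptional set can be arranged to handle the failure of existence of the metric derivative, its failure to be a norm, and the failure of $h|_{E_k}$ to be genuinely bi-Lipschitz, but this is standard.
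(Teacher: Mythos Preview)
Your proposal is correct and follows essentially the same approach as the paper: both reduce to characteristic functions, invoke the Kirchheim decomposition of $h = f\circ\varphi$ into countably many bi-Lipschitz pieces $E_k$ on which $\operatorname{md}(h)$ is a norm, together with a remainder where the Jacobian vanishes and whose image is $\cH^m$-null, and then use that each $(h|_{E_k},E_k)$ is a chart for $f(S)$ to match the two sides. The only organizational difference is that you first reduce to a single chart $\varphi_i$ by additivity over the disjoint atlas images (using that Lipschitz images of $\cH^m$-null sets are $\cH^m$-null for the right-hand side), whereas the paper carries all charts simultaneously and sums over a double index $(i,j)$; the content is identical.
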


\begin{proof}
%We first let $g = 1$. The general case then follows as in the proof of \cite[Corollary~8]{K}.
Let $(\varphi_i,K_i)$ be an atlas of $S$. By the definition of $\mu_S$ in \eqref{eq:musdef} and approximating $x \mapsto g(x) \mathbf J^\mu(\operatorname{md}(f_x))$ by simple functions, the left-hand side above can be written as
\begin{align*}
 & \sum_i \int_{K_i} g(\varphi_i(z)) \mathbf J^\mu(\operatorname{md}(f_{\varphi_i(z)})) \mathbf J^\mu(\operatorname{md}((\varphi_i)_z)) \,d\cL^m(z) \\
 & \qquad  = \sum_i \int_{K_i} g(\varphi_i(z)) \mathbf J^\mu(\operatorname{md}((f\circ\varphi_i)_{z})) \,d\cL^m(z) \ .
\end{align*}
To conclude the proof we assume first that $g = \chi_A$ for some measurable subset $A \subset S$. Proceeding as in Theorem~7 in \cite{K} we find for each $i$ countably many disjoint compact subsets $E_{i,j} \subset K_i$ such that:
\begin{itemize}
	\item $f \circ \varphi_i : E_{i,j} \to f(S)$ is bi-Lipschitz and $\operatorname{md}(f\circ \varphi_i)$ is a norm almost everywhere.
	\item $\operatorname{md}(f\circ \varphi_i)$ is a strict seminorm for almost all points of $K_i \setminus \bigcup_j E_{i,j}$.
	\item $\cH^m((f\circ\varphi_i)(K_i \setminus \bigcup_j E_{i,j})) = 0 = \mu_{f(S)}((f\circ\varphi_i)(K_i \setminus \bigcup_j E_{i,j}))$.
\end{itemize}
By the first point, $(f \circ \varphi_i, E_{i,j})$ is a chart of $f(S)$ for each $i$ and $j$ and by the definition of $\mu_{f(S)}$ in \eqref{eq:musdef}, 
\begin{align*}
\int_{E_{i,j} \cap \varphi_i^{-1}(A)} \mathbf J^\mu(\operatorname{md}((f\circ \varphi_i)_z))\,& d\cL^m(z) = \mu_{f(S)}(f(A \cap \varphi_i(E_{i,j}))) \\
 & = \int_{f(S)} \Biggl(\sum_{x \in f^{-1}(y)} \chi_{A \cap  \varphi_i(E_{i,j})}(x) \Biggr) \,d\mu_{f(S)}(y) \ .
\end{align*}
Note that the integrand in second line is equal to the characteristic function of $f(A \cap \varphi_i(E_{i,j}))$. Let $E \defl \bigcup_{i,j} \varphi_i(E_{i,j}) \subset S$. From the third point it follows $\mu_{f(S)}(f(S \setminus E))=0$. By summing over all $i$ and $j$ using the second point in the second line below:
\begin{align*}
\int_A \mathbf J^\mu(\operatorname{md}(f_x)) \,d\mu_S(x) & = \sum_i \int_{K_i \cap \varphi_i^{-1}(A)} \mathbf J^\mu(\operatorname{md}((f\circ\varphi_i)_{z})) \,d\cL^m(z) \\
 & = \sum_{i,j} \int_{E_{i,j} \cap \varphi_i^{-1}(A)} \mathbf J^\mu(\operatorname{md}((f\circ\varphi_i)_{z})) \,d\cL^m(z) \\
 & = \sum_{i,j} \int_{f(S)} \Biggl(\sum_{x \in f^{-1}(y)} \chi_{A  \cap \varphi_i(E_{i,j})}(x) \Biggr) \,d\mu_{f(S)}(y) \\
 & = \int_{f(S)} \Biggl(\sum_{x \in f^{-1}(y)} \chi_{A \cap E}(x) \Biggr) \,d\mu_{f(S)}(y) \\
 & = \int_{f(S) \setminus f(S\setminus E)} \Biggl(\sum_{x \in f^{-1}(y)} \chi_A(x) \Biggr) \,d\mu_{f(S)}(y) \\
 & = \int_{f(S)} \Biggl(\sum_{x \in f^{-1}(y)} \chi_A(x) \Biggr) \,d\mu_{f(S)}(y) \ .
\end{align*}
So the theorem holds for simple functions $g$ and by approximation for all non-negative measurable functions.
\end{proof}

A map $f : X \to Y$ as in the theorem above is called an infinitesimal isometry on $S$ if whenever $(\varphi,K)$ is a chart of $S$, then $\operatorname{md}(\varphi_x) = \operatorname{md}(f \circ \varphi_x)$ for almost all $x \in K$.

\subsection{Essential length spaces}

The essential length distance originates in \cite{DCP}. Our formulation in the context of metric measure spaces is adapted from \cite{AHPS}.

\begin{defi}
	\label{def:esslength}
A metric measure space $(X,d,\mu)$ is an \textbf{essential length space} if for all $x,y \in X$, all $N \subset X$ with $\mu(N) = 0$ and all $\epsilon > 0$ there exists a Lipschitz curve $\gamma : [0,1] \to X$ connecting $x$ and $y$ such that $\cL^1(\gamma^{-1}(N)) = 0$ and
\[
	d(x,y) + \epsilon \geq L(\gamma) \ .
\]
In other words, $d(x,y)$ is equal to the \textbf{essential length distance}
\[
d_{\rm{ess}}(x,y) \defl \sup \{d_N(x,y) \, : \, N \subset X, \mu(N) = 0\} \ ,
\]
where
\[
d_N(x,y) \defl \inf \left\{ L(\gamma)\,:\, \gamma \in \Lip([0,1], X), \gamma(0) = x, \gamma(1) = y, \cL^{1}(\gamma^{-1}(N)) = 0 \right\} \ .
\]
\end{defi}

This is compatible with the definition of essential metric in \cite[Definition~4.1]{AHPS} due to \cite[Proposition~4.6]{AHPS}. A further generalization to $p$-essential length distances for $p < \infty$ is studied in \cite{CS}.

Essential length spaces are obviously standard length spaces but the converse does not hold even for quite nice Lipschitz manifolds as we will see in Example~\ref{ex:esslength}. An $m$-dimensional Lipschitz manifold (possibly with boundary) is a metric space $(M,d)$ which can be covered by open sets which are bi-Lipschitz equivalent to open subsets of $\R^m$ (or of $\mathbf H^m \defl \{x \in \R^m : x_m \geq 0\}$). See e.g.\ \cite{LV} for more details. It is understood that in this case $\mu$ is the Hausdorff measure on $M$ (but any other Finsler volume induces the same essential length distance). If $(M,d)$ is a Lipschitz manifold, then the essential distance $d_{\rm{ess}}$ is locally bi-Lipschitz equivalent to $d$. This follows by elementary means and is an instance of \cite[Theorem~3.1]{DCJS} which lists five other characterizations for this bi-Lipschitz equivalence for more general metric measure spaces.

Suppose $M$ is a $C^1$-manifold (possibly with boundary) and $g$ is a continuous Riemannian metric on $M$. The induced length distance $d_{\rm i}$ is defined by
\[
d_{\rm i}(x,y) \defl \inf_{\gamma} L(\gamma) \ ,
\]
where the infimum is over length of all piecewise $C^1$-curves $\gamma : [0,1] \to M$ with $\gamma(0) = x$ and $\gamma(1) = y$. The length of such a $\gamma$ is defined by
\[
L(\gamma) \defl \int_0^1 g_{\gamma(t)}(\gamma'(t),\gamma'(t))^\frac{1}{2}\,dt \ .
\]
%Any such $M$ has a compatible smooth structure, so we may as well assume that $M$ is smooth.
As shown in \cite[Corollary~3.13]{B}, $d_{\rm i}$ can equivalently be defined with respect to absolutely continuous curves instead of piecewise $C^1$-curves. 
%Note that \cite[Corollary~3.13]{B} is formulated for Riemannian structures, but the proof works for Finsler structures too. Absolutey continuous curves in $M$ are defined chartwise \cite[Definition~3.3]{B}.
If $d$ is a background metric on $M$ such that $C^1$-charts on $M$ are locally bi-Lipschitz, such  metric exists by \cite[Theorem~3.5]{LV}, then absolutely continuous curves in $M$ are those curves absolutely continuous with respect to $d$. Thus any curve class in between piecewise $C^1$ and absolutely continuous induces the same length metric $d_{\rm i}$, see \cite[\S3.6]{B}. In contrast to general Lipschitz manifolds, Riemannian manifolds with the induced length distance are essential length spaces. The main reason is that for any sequence of $C^1$-curves $(\gamma_n)$ which converges in the $C^1$-topology to $\gamma$, it holds $L(\gamma_n) \to L(\gamma)$. Here are the details.

\begin{lem}
	\label{lem:esslength}
If $M$ is a $C^1$-manifold with continuous Riemannian metric $g$, then
\[
d_{\rm{ess}} = d_{\rm i} \ .
\]
\end{lem}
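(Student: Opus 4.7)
The plan is to establish both inequalities separately. The direction $d_{\rm{ess}} \geq d_{\rm i}$ is immediate: restricting to Lipschitz curves from $x$ to $y$ that meet a fixed null set $N$ only on an $\cL^1$-null subset of $[0,1]$ shrinks the admissible class, and Lipschitz curves already suffice to compute $d_{\rm i}$ by the discussion preceding the lemma. Hence $d_N(x,y) \geq d_{\rm i}(x,y)$ for every null $N$, and taking the supremum over $N$ yields the inequality.

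For the reverse $d_{\rm{ess}} \leq d_{\rm i}$, fix $x,y \in M$, a null set $N \subset M$ and $\varepsilon > 0$. First I would choose a piecewise $C^1$ curve $\alpha : [0,1] \to M$ joining $x$ to $y$ with $L_g(\alpha) \leq d_{\rm i}(x,y) + \varepsilon/2$, then subdivide $[0,1]$ at $0 = t_0 < \cdots < t_k = 1$ so that each arc $\alpha|_{[t_{j-1},t_j]}$ is a regular $C^1$ curve whose image lies in the domain $U_j$ of a single smooth chart $\phi_j : U_j \to V_j \subset \R^m$. The main step is to perturb each arc inside its chart so as to avoid $N$ while barely changing its Riemannian length. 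Writing $\widetilde\alpha_j \defl \phi_j \circ \alpha|_{[t_{j-1},t_j]}$ and, after a linear change of chart coordinates, arranging that the first coordinate of $\widetilde\alpha_j'(t)$ is bounded away from zero, I choose a nonnegative bump $\rho \in C_c^\infty((t_{j-1},t_j))$ and define $\beta_v(t) \defl \widetilde\alpha_j(t) + \rho(t)(0,v)$ for $v$ in a small ball $B \subset \R^{m-1}$. Because $\rho$ vanishes at both endpoints of the subinterval, $\beta_v$ agrees with $\widetilde\alpha_j$ at $t_{j-1}$ and $t_j$, so the perturbed pieces will glue into an honest curve from $x$ to $y$.

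The map $F : (t_{j-1},t_j) \times B \to V_j$, $F(t,v) \defl \beta_v(t)$, is smooth with Jacobian determinant equal to $\rho(t)^{m-1}$ times the first coordinate of $\widetilde\alpha_j'(t) + \rho'(t)(0,v)$, which by construction equals the first coordinate of $\widetilde\alpha_j'(t)$ and is therefore bounded away from zero on $\{\rho > 0\}$. Hence $F$ is a local diffeomorphism there, so $F^{-1}(\phi_j(N))$ is $\cL^m$-null, and Fubini produces arbitrarily small $v_j \in B$ for which the slice $\{t : \beta_{v_j}(t) \in \phi_j(N)\}$ is $\cL^1$-null. Continuity of $g$ then makes $L_g(\phi_j^{-1} \circ \beta_{v_j})$ within $\varepsilon/(2k)$ of $L_g(\alpha|_{[t_{j-1},t_j]})$ once $|v_j|$ is small enough, and concatenating the curves $\phi_j^{-1} \circ \beta_{v_j}$ for $j = 1,\dots,k$ produces the required Lipschitz curve. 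The main technical issue is securing the regularity assumption on $\widetilde\alpha_j'$; this is handled either by refining the subdivision so that each arc admits a rotation of chart coordinates aligning $\widetilde\alpha_j'(t)$ near $e_1$, or by first replacing $\alpha$ on each chart piece with a chart-wise straight segment, whose Riemannian length still approximates $d_{\rm i}$ arbitrarily well when the chart pieces are small, by uniform continuity of $g$ on compact sets.
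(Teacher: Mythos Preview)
Your overall strategy matches the paper's: both arguments perturb the curve inside an $(m-1)$-parameter family and invoke Fubini to locate a perturbation that meets $N$ only on an $\cL^1$-null set, with continuity of $g$ (equivalently, $C^1$-convergence of the perturbed curves) controlling the length. The paper first promotes the curve to a $C^1$-embedding and then uses the tubular neighborhood theorem to build the family, whereas you use the more explicit additive perturbation $\widetilde\alpha_j(t) + \rho(t)(0,v)$; your alternative via chart-wise straight segments is a clean way to secure the transversality you need.

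There is, however, a genuine gap in the choice $\rho \in C_c^\infty((t_{j-1},t_j))$. Because $\rho$ is compactly supported in the open interval, the set $\{\rho = 0\}$ has positive $\cL^1$-measure, and on this set $\beta_v = \widetilde\alpha_j$ for \emph{every} $v$. Taking $N$ to contain the image of $\alpha$ (a legitimate $\cH^m$-null set once $m \geq 2$) then forces $\beta_v^{-1}(\phi_j(N)) \supset \{\rho = 0\}$ for all $v$, so no choice of $v_j$ produces an admissible curve. Your Fubini step only controls $F^{-1}(\phi_j(N))$ on the region $\{\rho > 0\} \times B$ where $F$ is a local diffeomorphism; on $\{\rho = 0\} \times B$ that preimage can have full measure. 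The fix is simply to let $\rho$ vanish only at the two endpoints, for instance $\rho(t) = (t - t_{j-1})(t_j - t)$, so that the unperturbed locus is $\cL^1$-null; this is precisely what the paper does with its damping factor $t(1-t)$, and with this correction your argument goes through.
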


\begin{proof}
We assume that $M$ is connected since for points in different components, $d_{\rm{ess}}$ and $d_{\rm i}$ are $\infty$. Let $\gamma : [0,1] \to M$ be an injective piecewise $C^1$-curve which connects $x = \gamma(0)$ and $y = \gamma(1)$ in $M$. Working in a chart we first assume that $M$ is an open, connected subset $U$ of $\R^m$. Let $N \subset U$ be a set of $\cL^m$-measure zero and fix $\epsilon > 0$. By a smoothing argument we may replace $\gamma$ by a $C^1$-embedding $\tilde \gamma : [0,1] \to M$ connecting $x$ and $y$ such that $L(\tilde \gamma) < L(\gamma) + \epsilon$. In dimension $1$ this is trivially true, in dimensions $\geq 2$ this follows by a general position argument smoothing the corners. By the tubular neighborhood theorem, there exists a $C^1$-embedding $\Gamma : [0,1] \times \oB^{m-1}(0,1) \to M$ with $\Gamma(t,0) = \tilde\gamma(t)$. Define $\delta : [0,1] \times \oB^{m-1}(0,1) \to [0,1]$ by $\delta(t,p) \defl t(1-t)p$ and $\tilde \Gamma(t,p) \defl \Gamma(t,\delta(t,p))$. Then $\tilde \Gamma$ is a $C^1$-map which is still a $C^1$-embedding on $(0,1) \times \oB^{m-1}(0,1)$ but the endpoints $x$ and $y$ on $t=0,1$ are fixed. Applying the Theorem of Fubini there exists a sequence $p_n \to 0$ in $\oB^{m-1}(0,1)$ such that each curve $\gamma_{n,N}(t) \defl \tilde \Gamma(t,p_n)$ satisfies $\cL^1(\gamma_{n,N}^{-1}(N)) = 0$. $\gamma_n$ converges in the $C^1$-norm to $\tilde \gamma$, hence $L(\gamma_{n,N}) \to L(\tilde\gamma)$ for $n \to \infty$. Thus for $n$ big enough $\gamma_{n,N}$ is a $C^1$-curve connecting $x$ with $y$, essentially avoiding $N$ such that $L(\gamma_{n,N}) < L(\tilde\gamma) + \epsilon$. By \cite[Theorem~4.11]{B}, the length $L(\gamma)$ for an absolutely continuous curve $\gamma$ agrees with the metric definition of length with respect to the induced length distance $d_{\rm i}$. We denote this length by $L_{d_{\rm i}}(\gamma)$.

For arbitrary $x$ and $y$ (possibly on the boundary) let $\gamma : [0,1] \to M$ be piecewise $C^1$-curve connecting them such that $L(\gamma) \leq d_{\rm i}(x,y) + \epsilon$ and let $N \subset M$ be a set of measure zero. By approximation we may assume that $\gamma((0,1)) \subset M\setminus \partial M$. Covering $\gamma((0,1))$ by countably many charts in $M\setminus \partial M$ we find a $C^1$-embedding $\gamma_{\epsilon,N} : [0,1] \to M$ with:
\begin{enumerate}
	\item $\gamma_{\epsilon,N}(0) = x$, $\gamma_{\epsilon,N}(1) = y$, $\gamma_{\epsilon,N}((0,1)) \subset M\setminus \partial M$,
	\item $\cL^1(\gamma_{\epsilon,N}^{-1}(N)) = 0$,
	\item $L(\gamma_{\epsilon,N}) \leq L(\gamma) + \epsilon$.
\end{enumerate}
It follows that
\begin{align*}
d_{\rm{ess}}(x,y) & = \sup_{\cH^m(N)=0} L_{d_{\rm i}}(\gamma_{\epsilon,N}) = \sup_{\cH^m(N)=0} L(\gamma_{\epsilon,N}) \\
 & \leq L(\gamma) + \epsilon = L_{d_{\rm i}}(\gamma) + \epsilon \leq d_{\rm i}(x,y) + 2\epsilon \ .
\end{align*}
$d_{\rm{ess}}(x,y) \geq d_{\rm i}(x,y)$ is clear by definition. This proves the statement.
\end{proof}

The result above as well as those in \cite{B} should hold as well for continuous Finsler metrics $g$ on $C^1$-manifolds. In this situation, $g$ assigns to any point $p \in M$ a norm in the tangent space $g_p : T_pM \to [0,\infty)$ such that $g\circ X$ is continuous for every continuous vector field $X$ on $M$.

\section{Proof of the main theorem}

The following proposition is motivated by the partial rectifiability theorems \cite[Theorem~7.6]{L} and \cite[Theorem~7.4]{AK}. Since rectifiable currents are concentrated on separable spaces we may assume, by restricting to the support, that the ambient space is complete and separable.

\begin{prop}
	\label{prop:almostisometry}
Suppose $m \geq 1$, $L > 0$, $T \in \bI_m(X)$, $\pi \in \Lip(X,\R^m)$ and $V \subset \R^m$ is open. Set $U \defl \pi^{-1}(V) \subset X$ and assume that the following assumptions hold:
\begin{enumerate}
	\item $U \cap \spt(\partial T) = \emptyset$,
	\item $\pi_\#(T \res U) = \curr{V}$,
	\item $\pi : U \to V$ is almost injective in the sense that $\pi^{-1}(y) \cap S_T$ consists of a single point for almost all $y \in V$. ($S_T$ is as in \eqref{def:st}.)
	\item $\Lip(\pi)^{m-1} M_{\pi_\#\|T \res U\|}(y) \leq L$ for almost all $y \in V$ (hence for all $y \in V$).
\end{enumerate}
Then $\pi : \spt(T) \cap U \to V$ is a homeomorphism which is locally bi-Lipschitz in the sense that
\[
\Lip(\pi)^{-1} |\pi(x) - \pi(x')| \leq d(x,x') \leq 2 c_m L |\pi(x) - \pi(x')|
\]
for all $x,x' \in \spt(T) \cap U$ with
\begin{equation}
	\label{eq:separated}
d(x,x') < \min(\dist(x, X \setminus U), \dist(x', X \setminus U))
\end{equation}
Here, $c_m > 0$ is the constant of \eqref{eq:bvlipschitz}.
\end{prop}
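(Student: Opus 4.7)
The plan is to translate the hypotheses into a pointwise $\BV$-Lipschitz estimate on a suitable pushforward, and then extract distances between pairs of points in $\spt T \cap U$ via a carefully chosen cutoff function. First, since $T \in \bI_m(X) \subset \bN_m(X)$ and $\pi \in \Lip(X,\R^m)$, the zero-dimensional slice $\langle T, \pi, y\rangle$ is a well-defined integer $0$-current concentrated on $\pi^{-1}(y)\cap S_T$ for almost every $y \in \R^m$. For $y \in V$ we have $\pi^{-1}(y) \subset U$, so assumption (2) together with the observation that $\pi(X\setminus U)\cap V = \emptyset$ (because $\pi^{-1}(V)=U$) yields $\pi_\#\langle T,\pi,y\rangle = \delta_y$, and assumption (3) then forces $\langle T,\pi,y\rangle = \delta_{x_y}$, where $x_y$ denotes the unique element of $\pi^{-1}(y)\cap S_T$. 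Lemma~\ref{lem:formula} consequently provides, for every $f \in \Lip(X)$, a $\BV$ representative $u_f$ of $\pi_\#(T \res f)$ with $u_f(y) = f(x_y)$ for almost every $y \in V$.

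Next, specialize to $f \in \Lip(X)$ with $\spt f \subset U$. Assumption (1) then gives $f=0$ on $\spt \partial T$, whence $(\partial T)\res f = 0$, and the Ambrosio--Kirchheim product rule reduces $\partial(T\res f)$ to $-T \res df$. The locality axiom for metric currents forces $\spt(T \res df) \subset \spt f \subset U$, so $\|T \res df\| \leq \Lip(f)\,\|T\|\res U = \Lip(f)\,\|T \res U\|$ as measures on $X$. Combined with the standard mass bound $\|\pi_\# S\|(B) \leq \Lip(\pi)^{m-1}\|S\|(\pi^{-1}(B))$ for an $(m-1)$-dimensional current $S$, this yields
\[
	|Du_f|(B) \,=\, \|\partial\curr{u_f}\|(B) \,\leq\, \Lip(\pi)^{m-1}\Lip(f)\,\pi_\#\|T \res U\|(B)
\]
for every Borel $B \subset \R^m$, and hypothesis (4) then promotes this to $M_{|Du_f|}(y) \leq \Lip(f)\,L$ for almost every $y \in V$.

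Given a pair $x, x' \in S_T \cap U$ satisfying \eqref{eq:separated} at which the above maximal-function bound holds (which is the case for almost every such pair), pick $r_0 \in (d(x,x'),\dist(x,X\setminus U))$ and test with the $1$-Lipschitz cutoff
\[
	f(z) \defl \max\bigl(0,\,r_0 - d(z,x)\bigr),
\]
whose support is contained in $\overline{\B(x,r_0)} \subset U$. By construction $u_f(\pi(x)) = r_0$ and $u_f(\pi(x')) = r_0 - d(x,x')$, so \eqref{eq:bvlipschitz} applied to $u_f$ yields
\[
	d(x,x') \,=\, |u_f(\pi(x)) - u_f(\pi(x'))| \,\leq\, 2 c_m L\, |\pi(x) - \pi(x')|.
\]
The reverse bound $|\pi(x)-\pi(x')| \leq \Lip(\pi)\,d(x,x')$ is just Lipschitz continuity of $\pi$. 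Since the set of pairs where the estimate applies is dense within the open set defined by \eqref{eq:separated}, and since $\spt(T)\cap U = \overline{S_T}\cap U$, both inequalities extend to all admissible pairs by continuity. Injectivity of $\pi$ on $\spt(T)\cap U$ and surjectivity onto $V$ then follow by extending the almost-everywhere inverse $y\mapsto x_y$ to a continuous local inverse using the derived bi-Lipschitz bound.

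The main technical obstacle is the mass bookkeeping in the second paragraph: hypothesis (4) controls only the \emph{restricted} pushforward $\pi_\#\|T \res U\|$, not $\pi_\#\|T\|$, so it is essential that $T \res df$ be localized to $\spt f \subset U$ rather than spread across $X$. This localization is precisely what allows $M_{|Du_f|}$ to inherit the bound $L$ and thereby makes the BV-Lipschitz estimate deliver the bi-Lipschitz constant $2c_m L$.
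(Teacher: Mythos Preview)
Your overall strategy matches the paper's: push forward $T \res f$ for a suitable cutoff $f$, bound $|Du_f|$ by $\Lip(\pi)^{m-1}\pi_\#\|T \res U\|$, and invoke the $\BV$-Lipschitz estimate \eqref{eq:bvlipschitz}. The explicit cutoff $f(z) = \max(0, r_0 - d(z,x))$ is even slightly cleaner than the paper's version, since it gives $|u_f(\pi(x)) - u_f(\pi(x'))| = d(x,x')$ exactly.

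There is, however, a genuine circularity in the third paragraph. The identity $u_f(y) = f(x_y)$ from Lemma~\ref{lem:formula} holds only for \emph{almost every} $y \in V$, and the exceptional null set depends on $f$; likewise \eqref{eq:bvlipschitz} requires $\pi(x)$ and $\pi(x')$ to be Lebesgue points of $u_f$, again an $f$-dependent condition. But your cutoff is chosen \emph{after} fixing $x$ and $x'$ (it is centred at $x$ with a radius determined by the pair), so the phrase ``which is the case for almost every such pair'' is not justified: the null set you must avoid moves with the pair. The paper resolves this by fixing in advance a \emph{countable} family $\cF$ of $1$-Lipschitz test functions rich enough that every needed cutoff is approximated by some $f_{x,\rho,\epsilon}\in\cF$; one then passes to a single full-measure set $A'\subset V$ on which the slice identity and the Lebesgue-point property hold simultaneously for every $f\in\cF$, and works only with $x,x'\in\pi^{-1}(A')\cap S_T$. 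Your argument can be repaired along these lines, but as written the key step does not go through.

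A second, smaller gap: the last sentence claims that extending $y\mapsto x_y$ continuously yields both surjectivity and injectivity. Surjectivity is fine, but injectivity via $g\circ\pi=\id$ requires $\pi^{-1}(A')\cap S_T$ to be dense in $\spt(T)\cap U$, not merely $S_T$; the paper verifies this explicitly from hypothesis~(4) by showing $\|T\res U\|(\pi^{-1}(V\setminus A'))=0$, and then proves openness of $\pi$ separately via slicing and the constancy theorem. Both steps are elided in your sketch.
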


\begin{proof}
%Note that by the constancy theorem of \cite{F}, $\pi_\#(T \res U) = n\curr{V}$ holds for some integer $n \in \Z$. so (2) is the statement $n = 1$.

We abbreviate $\mu \defl \Lip(\pi)^{m-1}\pi_\#\|T \res U\|$, i.e.
\[
\mu(B) = \Lip(\pi)^{m-1}\|T\|(\pi^{-1}(B) \cap U)
\]
for every Borel set $B \subset \R^m$. So (4) is equivalent to $M_{\mu}(y) \leq L$.

%Fix $x_0 \in X$ and $r > 0$ such that $\B(x_0,5r) \subset U$.
By \cite[Theorem~5.7]{AK}, (2) and (3) there is a Borel set of full measure $A \subset V$ such that for any $y \in A$ there exists a unique point $x(y) \in \pi^{-1}(y) \cap S_T$ with $\langle T, \pi, y \rangle = \curr{x(y)} \in \bI_0(X)$ and $\pi_\#\langle T, \pi, y \rangle = \curr y$. Note that every element of $\bI_0(X)$ is a finite sum $\sum_i n_i\curr{x_i}$ with integer multiplicities $n_i \in \Z$. 

For $f \in \Lip(X)$ with $\spt(f) \subset U$ we let $u_f \in \BV(\R^m)$ be the function that represents the current $\pi_\#(T \res f)$ as in Lemma~\ref{lem:formula}. For any Borel set $B \subset \R^m$,
\begin{align*}
|Du_f|(B) & = \|\partial \curr{u_f}\|(B) = \|\partial(\pi_{\#}(T \res f))\|(B) = \| \pi_{\#}(\partial(T \res f))\|(B) \\
 & \leq  \Lip(\pi)^{m-1} \|\partial(T \res f)\|(\pi^{-1}(B)) \ .
\end{align*}
If moreover $\Lip(f) \leq 1$, it follows from (1) and \cite[Equation~(3.5)]{AK} that
\[
\partial(T \res f) = (\partial T) \res f - T \res df =  - T \res df = - (T\res U)\res df \ .
\]
Hence
\begin{align}
	\nonumber
|Du_f|(B) & \leq \Lip(\pi)^{m-1}\|T \res \spt(f)\|(\pi^{-1}(B)) \\
	\label{eq:variation}
 & \leq  \Lip(\pi)^{m-1}\|T \res U\|(\pi^{-1}(B)) \leq \mu(B) \ .
\end{align}
Let $\cF \subset \Lip(X)$ be a countable collection of $1$-Lipschitz functions such that for every $x \in U$, $0 < \epsilon < 1$ and $0 < \rho < \dist(x,X\setminus U)$ there is $f_{x,\rho,\epsilon} \in \cF$ with
\begin{equation}
	\label{eq:fproperties}
f_{x,\rho,\epsilon}(x) \geq \epsilon \rho \ , \quad 0 \leq f_{x,\rho,\epsilon} \leq \rho \ , \quad f_{x,\rho,\epsilon} = 0 \text{ on } X \setminus \oB(x,\rho) \ .
\end{equation}
Note that $X$ and hence also $U$ is separable. Then there exists a Borel set $A' \subset A$ of full measure such that for every $f \in \cF$ every $y \in A'$ is a density point of $u_f$ and $\langle T, \pi, y \rangle(f) = u_f(y)$. From \eqref{eq:bvlipschitz}, \eqref{eq:variation} and (4) it follows that
\begin{align}
	\nonumber
|u_f(y) - u_f(y')| & \leq c_m \left(M_\mu(y) + M_\mu(y') \right) |y - y'| \\
 \label{eq:lipbound}
 & \leq 2c_mL|y-y'|
\end{align}
for all $y,y' \in A'$ and $f \in \cF$. Let $x,x' \in \pi^{-1}(A') \cap S_T$ be different but close enough together such that \eqref{eq:separated} holds and set $\rho \defl d(x,x')$. Fix $0 < \epsilon < 1$ and set $f \defl f_{x,\rho,\epsilon} \in \cF$ as in \eqref{eq:fproperties}. It holds $f(x) \geq \epsilon \rho$ and $f(x') = 0$. Then
\begin{align*}
|\langle T, \pi, \pi(x') \rangle(f)| & \leq \int_X f \, d\|\langle T, \pi, \pi(x') \rangle\| \\
 & \leq \rho \cdot \|\langle T, \pi, \pi(x') \rangle\|(\{x\}) = 0 \ .
\end{align*}
On the other side, 
\begin{align*}
|\langle T, \pi, \pi(x) \rangle(f)| & \geq |\langle T, \pi, \pi(x) \rangle(\chi_{\{x\}} f)| - |\langle T, \pi, \pi(x) \rangle(\chi_{U \setminus \{x\}} f)| & \\
 & \geq \epsilon \rho |\langle T, \pi, \pi(x) \rangle(\chi_{\{x\}})| \geq \epsilon \rho \ .
\end{align*}
We conclude from \cite[Equation~(5.7)]{AK} and \eqref{eq:lipbound} that
\begin{align*}
\epsilon d(x,x') & = \epsilon \rho \leq |\langle T, \pi, \pi(x) \rangle(f) - \langle T, \pi, \pi(x') \rangle(f)| \\
 & = |u_f(\pi(x)) - u_f(\pi(x'))| \\
 & \leq 2c_m L |\pi(x) - \pi(x')|.
\end{align*}
This holds for all $0 < \epsilon < 1$, hence
\begin{eqnarray}
	\label{eq:lipbound2}
d(x,x') \leq 2c_m L |\pi(x) - \pi(x')|
\end{eqnarray}
for all $x,x' \in \pi^{-1}(A') \cap S_T$ separated as in \eqref{eq:separated}.

We claim that $T \res U$ is concentrated on $\pi^{-1}(A') \cap S_T$. For any $\epsilon > 0$, $V \setminus A'$ can be covered by countably many balls $\B(y_n,r_n) \subset V$ such that
\[
\sum_n \alpha_m r^m_n \leq \epsilon \ .
\]
By (4),
\begin{align*}
\|T\res U\|(\pi^{-1}(V \setminus A')) & \leq \sum_n\|T\res U\|(\pi^{-1}(\B(y_n,r_n))) \\
 & \leq \sum_n M_{\pi_\#\|T\res U\|}(y_n) \alpha_m r^m_n \\
 & \leq \Lip(\pi)^{1-m}L\epsilon \ .
\end{align*}
Hence $\|T\res U\|(\pi^{-1}(V \setminus A')) = 0$ and thus $\pi^{-1}(A') \cap S_T$ is dense in $\spt(T \res U) = \spt(T) \cap U$. This shows that
\begin{equation}
	\label{eq:bilip}
d(x,x') \leq 2 c_m L |\pi(x) - \pi(x')|
\end{equation}
for all $x,x' \in \spt(T) \cap U$ separated as in \eqref{eq:separated}.

Next we show that $\pi : \spt(T) \cap U \to V$ is an open map. Let $x \in \spt(T) \cap U$ and $r > 0$ be small enough such that:
\begin{itemize}
	\item $\B(x,r) \subset U$,
	\item $T \res \oB(x,r) \in \bI_m(X)$,
	\item $\spt(\partial(T \res \oB(x,r))) \subset \mathbf S(x,r) \defl \{x' \in X : d(x,x') = r\}$,
	\item $\pi$ restricted to $\spt(T) \cap \oB(x,r)$ is a bi-Lipschitz embedding into $\R^m$.
\end{itemize}
If $d_x$ denotes the distance function to $x$, then point two and three are consequences of the slicing identity
\[
\langle T, d_x, r \rangle = \partial (T \res \{d_x < r\}) - (\partial T)\res \{d_x < r\} = \partial (T \res \{d_x < r\})
\]
for almost all small $r > 0$ which is due to \cite[Lemma~5.3]{AK} or \cite[Definition~6.1]{L} and assumption (1). Point four is a consequence of \eqref{eq:bilip}. It follows that $R \defl \pi_\#(T \res \oB(x,r)) \neq 0$ because $x \in \spt(T)$ and $\pi$ is bi-Lipschitz. Now $\oB(\pi(x),s) \subset V \setminus \pi(\mathbf S(x,r))$ for some small $s > 0$. If $R \res \oB(\pi(x),s) \neq 0$, then $R \res \oB(\pi(x),s) = \lambda \curr{\oB(\pi(x),s)}$ for some $\lambda \neq 0$ by the constancy theorem of \cite{F}. Thus $\oB(\pi(x),s)$ is contained in $\pi(\B(x,r))$. Otherwise if $R \res \oB(\pi(x),s) = 0$,
\[
0 = R \res \oB(\pi(x),s) = \pi_\#(T \res N_x)
\]
for the neighbourhood $N_x \defl \spt(T) \cap \oB(x,r) \cap \pi^{-1}(\oB(\pi(x),s))$ of $x$ in $\spt(T)$. But $\pi$ is bi-Lipschitz on $N_x$ and $T \res N_x \neq 0$ because $x$ is in the support of $T$. Thus $\pi_\#(T \res N_x) \neq 0$. This is a contradiction.

It remains to show that $\pi : \spt(T) \cap U \to V$ is injective. But this follows directly from assumption (3) and the openness of $\pi$.
\end{proof}

This allows to prove the main theorem. Without loss of generality we assume that $\spt(T) = X$.

\begin{proof}[{Proof of Theorem~\ref{thm:rigidity}}]
Let $f : X \to M$ as in the statement. $f$ being $1$-Lipschitz and assumption (c) imply that $f$ is volume preserving in the sense
\begin{equation}
	\label{eq:volpreserve}
\|T\|^\mu(B) = \|\curr M\|^\mu(f(B)) = \cH^m(f(B))
\end{equation}
for any Borel set $B \subset X$. $T$ is represented by the countably $\cH^m$-rectifiable set $S = S_T$, a density $\theta : S \to \N$ and an orientation induced by an atlas of positively oriented, pairwise disjoint charts $(\varphi_i,K_i)$ for $S$. With the Finsler area formula \ref{thm:areaformula} it follows
\begin{align*}
\bM^\mu(T) & = \int_S \theta(x) \,d\mu_S(x) \geq \int_S \theta(x) \mathbf J^\mu(\operatorname{md}(f_x)) \,d\mu_S(x) \\
 & = \int_{M} \Biggl(\sum_{x \in f^{-1}(y)} \theta(x) \Biggr) \,d\mu_{M}(y) \geq \bM^\mu(\curr M) = \bM^\mu(T) \ .
\end{align*}
The first inequality holds because $f$ is $1$-Lipschitz and hence $\mathbf J^\mu(\operatorname{md}(f_x)) \leq 1$ almost everywhere. Thus we obtain equalities and the following consequences:
\begin{itemize}
	\item $\cH^0(f^{-1}(y) \cap S) = 1$ for almost all $y \in M$.
	\item $\mathbf J^\mu(\operatorname{md}(f_x)) = 1$ for $\cH^m$-almost all $x \in S$.
	\item $\theta(x) = 1$ for $\cH^m$-almost all $x \in S$.
\end{itemize}
Because $f$ is $1$-Lipschitz it holds $\operatorname{md}((\varphi_i)_x) \geq \operatorname{md}((f \circ \varphi_i)_x)$ for all $i$ and almost all $x \in K_i$. Since $M$ is infinitesimally Euclidean, $\operatorname{md}((f \circ \varphi_i)_x)$ is an Euclidean norm for all $i$ and almost all $x \in K_i$. Since $\mu$ is Euclidean rigid, the second point above implies $\operatorname{md}((\varphi_i)_x) = \operatorname{md}((f \circ \varphi_i)_x)$ for all $i$ and almost all $x \in K_i$. Thus $f$ is an infinitesimal isometry and $S$ is infinitesimally Euclidean too.

We next apply Proposition~\ref{prop:almostisometry} by postcomposing with charts of $M$. Fix $y_0 \in M \setminus \partial M$ and let $D : M \to \R$ be the distance function to $y_0$, i.e.\ $D(y) = d(y_0,y)$. Fix $r > 0$ small enough such that $\B(y_0,2r) \cap \partial M = \emptyset$ and there exists a positively oriented bi-Lipschitz chart $\varphi : \oB(y_0,2r) \to \R^m$ onto an open subset of $\R^m$. By the slicing theory of \cite{AK} and assumption (b) we can further assume that
\[
T \res f^{-1}(\B(y_0,r)) = T \res f^{-1}(\oB(y_0,r)) \ \mbox{and}
\]
\[
 \quad \partial( T\res f^{-1}(\oB(y_0,r))) = \langle T, D\circ f, r\rangle \in \bI_{m-1}(X)
\]
%\[
%f_\# \langle T, D\circ f, r\rangle = \langle \curr M, D, r\rangle = \partial(\curr M \res \oB(y_0,r))
%\]
holds for this $r$. Set $X' \defl f^{-1}(\B(y_0,r))$, $U \defl f^{-1}(\oB(y_0,r))$, $V \defl \varphi(\oB(y_0,r))$, $T' \defl T \res X'$ and $\pi \defl \varphi \circ f : X' \to \R^m$. Then
\begin{itemize}
	\item $T' \in \bI_m(X)$,
	\item $\pi_\# T' = \varphi_\#(\curr M \res \oB(y_0,r)) = \curr{V}$,
	\item $\spt(\partial T')$ is contained in $(D \circ f)^{-1}(r)$ which is disjoint from $U$.
\end{itemize}
We now apply Proposition~\ref{prop:almostisometry} with $X'$ and $T'$ in place of $X$ and $T$ respectively and $\pi$ as above. Assumptions (1), (2) and (3) of Proposition~\ref{prop:almostisometry} are clearly satisfied. It holds $f_\#\|T'\| = \cH^m \res \oB(y_0,r)$ by \eqref{eq:volpreserve} and because $\varphi$ is bi-Lipschitz there is some constant $C \geq 1$ such that
\[
C^{-1}\|\curr V\| \leq \pi_\#\|T'\| \leq C\|\curr V\| \ .
\]
The maximal function of $\|\curr V\|$ clearly satisfies $M_{\|\curr V\|} \leq 1$, hence (4) of Proposition~\ref{prop:almostisometry} holds too for some finite $L > 0$. Thus for all $y_0 \in M \setminus \partial M$ we find $0 < r < \dist(y_0,\partial M)$ such that $f : \spt(T) \cap f^{-1}(\oB(y_0,r)) \to \oB(y_0,r)$ is a homeomorphism and locally bi-Lipschitz. 

Collecting the local information, $f : X^\circ \defl \spt(T)\setminus\spt(\partial T) \to M^\circ \defl M \setminus \partial M$ is $1$-Lipschitz, surjective, open, locally bi-Lipschitz and an infinitesimal isometry. Since $f^{-1}(y) \cap X^\circ$ is a single point for $\cH^m$-almost all $y \in M^\circ$ and $f$ is open, $f : X^\circ \to M^\circ$ is injective and thus a homeomorphism which is locally bi-Lipschitz. Next we show that $f : X^\circ \to M^\circ$ also preserves the length of curves. Suppose that $\varphi : U \to X^\circ$ and $f\circ\varphi : U \to M^\circ$ are bi-Lipschitz charts defined on an open set $U \subset \R^m$. Because $f$ is an infinitesimal isometry it follows that $\operatorname{md}(\varphi_x) = \operatorname{md}((f\circ\varphi)_x)$ for almost all $x \in U$. We call this collection by $A \subset U$. If $\gamma : [0,1] \to U$ is a Lipschitz curve with $\cL^1(\gamma^{-1}(U\setminus A)) = 0$, then, for example by \cite[Theorem~4.1.6]{AT} it holds
\[
L(\varphi\circ\gamma) = \int_0^1 \operatorname{md}(\varphi_x)(\gamma'(t))\,dt = \int_0^1 \operatorname{md}((f\circ\varphi)_x)(\gamma'(t))\,dt = L(f\circ\varphi\circ\gamma) \ .
\]
Covering $M^\circ$ by countably many such bi-Lipschitz charts we find a set $N \subset M^\circ$ with $\cH^m(N) = 0$ such that $L(\gamma) = L(f^{-1} \circ \gamma)$ for all Lipschitz curves $\gamma : [0,1] \to M^\circ$ with $\cL^1(\gamma^{-1}(N)) = 0$. Because $M^\circ$ is an essential length space, this implies $d(x,y) \leq d(f(x),f(y))$ for all $x,y \in X^\circ$. The other inequality is clear because $f$ is $1$-Lipschitz. Thus $f : X^\circ \to M^\circ$ is an isometry. Because $M$ is the metric completion of $M^\circ$, and $X$ is complete, $f : \overline{X^\circ} \to M$ is an isometry too.
Now $X^\circ = \spt(T)\setminus\spt(\partial T)$ is dense in $\spt(T)$ because
\[
\|T\|^\mu(\spt(\partial T)) = \cH^m(f(\spt(\partial T))) \leq \cH^m(\partial M) = 0 \ ,
\]
by \eqref{eq:volpreserve} and assumption (b). Thus $\overline{X^\circ} = \spt(T)$ and hence $f : \spt(T) \to M$ is an isometry as claimed.
% Note that $\partial M$ is a closed set of measure zero in $M$. It follows that $d(x,y) \leq d(f(x),f(y))$ for all $x,y \in X$. Hence $f : X \to M$ is an isometry. This concludes the proof.
\end{proof}

\section{Counterexamples and comments}

In case $M$ has a boundary, it is not clear to the author if assumption (1) in Theorem~\ref{thm:rigidity} can be replaced by assuming that $(M,d)$ is an essential length space instead of $(M\setminus\partial M)$. In any case it can't be replaced by assuming $M$ (or $M\setminus \partial M$) to be a length space as the following example demonstrates.

\begin{ex}
	\label{ex:esslength}
Let $S^2$ be the standard Euclidean sphere in $\R^3$ with induced length metric $D$. Fix a great circle $C$ in $S^2$. A new metric $d$ on $S^2$ is defined by
\[
d(x,y) = \min\left(D(x,y), \inf_{v,w \in C} D(x,v) + \tfrac{1}{2}D(v,w) + D(w,y)\right) \ .
\]
The resulting metric space $(S^2,d)$ is denoted by $\cS$ and $f : S^2 \to \cS$ is the identity. The following statements are easy to check:
\begin{enumerate}
	\item $\cS$ is a geodesic space.
	\item $f$ is $1$-Lipschitz with $\frac{1}{2}$-Lipschitz inverse.
	\item $f$ is an infinitesimal isometry outside $C$ and thus area preserving.
	\item $f_\#\curr{S^2} = \curr{\cS}$ with $\bM(\curr S) = \bM(\curr{\cS})$ as a consequence of (2) and (3).
\end{enumerate}
$\cS$ is a geodesic Lipschitz manifold because of (1) and (2) but $f$ is not an isometry.
\end{ex}

A careful application of the Nash-Kuiper $C^1$-isometric embedding theorem applied to a sequence of Riemannian metrics on $S^2$ akin to \cite{LD} probably shows that $\cS$ can be realized isometrically as the length distance on some Lipschitz submanifold of $\R^3$. Note that \cite[Corollary~2.6]{LD} gives a path isometric embedding of $\cS$ into $\R^3$, but it is not clear that this embedding, in this particular situation (in general it is not), is also bi-Lipschitz.

We propose a different construction of a geodesic Lipschitz surface in $\R^3$ with the same properties listed for $\cS$ above. This serves as a counterexample to \cite[Question~8.1]{BCS}. The essential part of this surface is a Lipschitz graph over the $xy$-plane. Define $z : \R \to \R$ to be the periodic zigzag function
\[
z(t) \defl \min\{|t - n| : n \in \Z \} \ .
\]
For any integer $n \in \Z \setminus \{0\}$ the rescaled version $z_n : \R \to \R$ is defined by $z_n(t) \defl 2^{-n}z(t2^n)$. All these functions are piecewise linear with $|z_n'(t)| = 1$ for almost all $t$. Define the function $f : \R \times (0,\infty) \to \R$ by
\[
f(s,\lambda 2^n + (1-\lambda) 2^{n+1}) = \lambda z_n(s) + (1-\lambda)z_{n+1}(s)
\]
in case $n \in \Z$ and $\lambda \in [0,1]$. We extend $f$ to all of $\R^2$ by setting $f(x,0) = 0$ and $f(x,y) = f(x,-y)$ for $(x,y) \in \R \times (-\infty,0)$. It is easy to check that $f$ is Lipschitz and hence the graph
\[
M \defl \{(x,y,f(x,y)) : (x,y) \in \R^2\}
\]
equipped with the Euclidean distance $d$ of $\R^3$ is a Lipschitz surface. Let $d_i \geq d$ be the induced length distance on $M$. Because $f$ is Lipschitz, there is some $L > 0$ such that $d_i \leq Ld$. Let $I$ be the line segment in $M$ with endpoints $p = (0,0,0)$ and $q = (1,0,0)$. It holds $d(p,q) = d_i(p,q) = 1$. It can be shown that there is some $c > 0$ such that whenever $\gamma$ is a Lipschitz curve in $M$ connecting $p$ and $q$ with $\cH^1(\im(\gamma) \cap I) = 0$, then $L(\gamma) \geq c$. Thus $(M,d_i)$ is not an essential length space. If $d_e$ denotes the essential length distance on $M$ induced by $d$ (or equivalently $d_i$), then
\[
Ld \geq d_e \geq d_i \geq d \ .
\]
So the identity $g : (M,d_e) \to (M,d_i)$ is $1$-Lipschitz. $g$ is also volume preserving, since $M$ is piecewise smooth outside the $x$-axis $\{(x,0,0) : x \in \R\}$. But $g$ is not an isometry because $d_i(p,q) = 1 < d_e(p,q)$.

It is straight forward to modify $M$ in $\R^3$ so that the resulting space is bi-Lipschitz equivalent to $S^2$ with properties similar to $\cS$ above. For example we may restrict $M$ to $[-1,1]^2\times \R$ to obtain a compact Lipschitz surface with piecewise linear boundary which can be closed to obtain a Lipschitz sphere.

\end{document}